\newtheorem{theorem}{Theorem}[section]
\newtheorem*{theorem*}{Theorem}
\newtheorem{lemma}[theorem]{Lemma}
\newtheorem{proposition}[theorem]{Proposition}
\newtheorem{corollary}[theorem]{Corollary}
\newtheorem{OP}[theorem]{Open Problem}
\theoremstyle{definition}
\newtheorem{remark}[theorem]{Remark}
\renewcommand{\leq}{\leqslant}
\renewcommand{\le}{\leq}
\renewcommand{\geq}{\geqslant}
\renewcommand{\ge}{\geq}
\newcommand{\eps}{\varepsilon}
\renewcommand{\hat}{\widehat}
\newcommand\opr[1]{\operatorname{#1}}
\def\R{\mathbf{R}}
\def\C{\mathbf{C}}
\def\Z{\mathbf{Z}}
\def\F{\mathbf{F}}
\def\P{\mathbf{P}}
\def\E{\mathbf{E}}
\def\Var{\opr{Var}}
\def\supp{\opr{supp}}
\newcommand{\wh}{\widehat}
\def\primes{\mathcal{P}} 
\newcommand\muhat[1][]{{\widehat{\mu_{#1}}}} 
\newcommand\pihat[1][]{{\widehat{\pi_{#1}}}} 
\newcommand\floor[1]{\left\lfloor{#1}\right\rfloor}
\newcommand\ceil[1]{\left\lceil{#1}\right\rceil}
\renewcommand{\a}{\alpha}
\renewcommand{\d}{\delta}
\newcommand{\e}{\varepsilon}
\begin{document}

\title[Mixing time of the CDG random process]{Mixing time of the Chung--Diaconis--Graham random process}

\author{Sean Eberhard}
\address{Sean Eberhard, Centre for Mathematical Sciences, Wilberforce Road, Cambridge CB3~0WB, UK}
\email{eberhard@maths.cam.ac.uk}

\author{P\'eter P.~Varj\'u}
\address{P\'eter P.~Varj\'u, Centre for Mathematical Sciences, Wilberforce Road, Cambridge CB3~0WB, UK}
\email{pv270@dpmms.cam.ac.uk}

\thanks{Both authors have received funding from the European Research Council (ERC) under the European Union’s Horizon 2020 research and innovation programme (grant agreement No. 803711). PV was supported by the Royal Society.}

\begin{abstract}
Define $(X_n)$ on $\Z/q\Z$ by $X_{n+1} = 2X_n + b_n$, where the steps $b_n$ are chosen independently at random from $-1, 0, +1$. The mixing time of this random walk is known to be at most $1.02 \log_2 q$ for almost all odd $q$ (Chung--Diaconis--Graham, 1987), and at least $1.004 \log_2 q$ (Hildebrand, 2009). We identify a constant $c = 1.01136\dots$ such that the mixing time is $(c+o(1))\log_2 q$ for almost all odd $q$.

In general, the mixing time of the Markov chain $X_{n+1} = a X_n + b_n$ modulo $q$, where $a$ is a fixed positive integer and the steps $b_n$ are i.i.d.~with some given distribution in $\Z$, is related to the entropy of a corresponding self-similar Cantor-like measure (such as a Bernoulli convolution). We estimate the mixing time up to a $1+o(1)$ factor whenever the entropy exceeds $(\log a)/2$.
\end{abstract}

\maketitle

\section{Introduction}

Let $a > 1$ be a positive integer and let $\mu$ be a finitely supported measure on $\Z$. Assume that $\gcd(\supp \mu - \supp \mu) = 1$, i.e., that $\mu$ is not supported on a coset of a proper subgroup. Define a random process $(X_n)_{n\geq 0}$ on $\Z$ by $X_0 = 0$ and
\[
  X_{n+1} = a X_n + b_n,
\]
where $b_1, b_2, \dots$ are independent and distributed according to $\mu$. Let $\mu_n$ be the law of $X_n$. In other words, $\mu_n$ is defined by the recursion
\begin{align*}
  \mu_0 &= \delta_0, \\
  \mu_n &= \mu * (m_a)_* \mu_{n-1} \qquad (n\geq 1),
\end{align*}
where $m_a(x) = ax$ is the multiplication-by-$a$ map.

Now for any positive integer $q$ consider the reduction $X_n \bmod q$. The mixing time of this random process, loosely speaking the smallest $n$ such that
\[
  \|\mu_n \bmod q - u_q\| = o(1),
\]
where $u_q$ is the uniform measure on $\Z / q\Z$ and $\|\cdot\|$ denotes $l^1$ norm, was first studied in detail by Chung, Diaconis, and Graham~\cite{CDG} (motivated by pseudorandom number generation). In the prototypical case $a = 2$, $\mu = u_{\{-1, 0, 1\}}$ (uniform on $\{-1, 0, 1\}$), the following bounds have been proved.
\begin{itemize}
  \item If $n \geq (c_1 + \eps) \log_2 q$, where
  \[
    c_1 = \left( 1 - \log_2 \left( \frac{5 + \sqrt{17}}{9} \right)\right)^{-1} \approx 1.02, 
  \]
  then
  \[
    \frac12 \|\mu_n \bmod q - u_q\| \leq \eps
  \]
  for almost all odd $q$ (Chung--Diaconis--Graham~\cite{CDG}).
  \item If $n \leq c_2 \log_2 q$, where
  \[
    c_2 = \frac1{-\frac12 - \frac4{18} \log_2(\frac{4}{36}) - \frac{5}{18} \log_2(\frac{5}{36})} \approx 1.004, 
  \]
  then
  \[
    \frac12 \|\mu_n \bmod q - u_q\| \to 1
  \]
  as $q \to\infty$ (Hildebrand~\cite{hildebrand}).
  For further work in this direction, see \cite{Hil2}, \cite{Nev} and the references therein.
\end{itemize}
Thus the mixing time of $X_n \bmod q$ is a slight constant multiple larger than $\log_2 q$, for typical $q$. (Chung, Diaconis, and Graham also show that there are infinitely many odd $q$ for which the mixing time exceeds $c \log q \log \log q$.)

We advance the following explanation of this phenomenon. Defining the entropy of any measure $\mu$ by
\[
  H(\mu) = \sum_x \mu(\{x\}) \log \mu(\{x\})^{-1},
\]
the \emph{asymptotic entropy} (or \emph{entropy rate}) of $(X_n)_{n\geq0}$ is defined by
\[
  H(a, \mu) = \lim_{n\to\infty} \frac1n H(\mu_n).
\]
It is well known that the limit exists, and that
\[
  0 < H(a, \mu) \leq \min(\log a, H(\mu)).
\]
In the case of $\mu = u_{\{-1, 0, 1\}}$, a computation shows that the entropy $H(2, \mu)$ is slightly less than $\log 2$. On the other hand, $X_n \bmod q$ cannot equidistribute before the entropy of $\mu_n$ reaches $(1-o(1))\log q$. Hence it takes slightly more than $(\log q) / (\log 2)$ steps for $X_n \bmod q$ to mix.

In general, the entropy $H(a, \mu)$ and the mixing time of $X_n \bmod q$ are related by our main theorem, provided that $H > \frac12 \log a$.

\begin{theorem} \label{mainthm}
Let $a$ and $\mu$ be given. Let $H = H(a, \mu)$. The following hold for all $\d>0$.
\begin{enumerate}[label=(\roman*)]
  \item If $Hn \leq (1-\delta) \log q$ then
  \[
    \frac{1}{2}\|\mu_n \bmod q - u_q\| = 1 - o(1).
  \]
  \item Assume $H > \frac12 \log a$. Then for a proportion $1-o(1)$ of primes $q$ such that $\log q \leq (1-\delta) Hn$, we have
  \[
    \|\mu_n \bmod q - u_q \| = o(1).
  \]
  The same holds for a proportion $1-o(1)$ of the composite $q$ such that $(q, a) = 1$.
\end{enumerate}
\end{theorem}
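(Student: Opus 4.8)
We sketch the strategy.

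\emph{Part (i).} I would show that $\mu_n$ concentrates on a set of size $\approx e^{Hn}$, which is $o(q)$ under the hypothesis, so $\mu_n\bmod q$ cannot be close to uniform. Fix a large $m$ with $H(\mu_m)\le(H+\eps/2)m$, possible since $H(\mu_m)/m$ decreases to $H$. From the block decomposition $X_{km}=\sum_{j=0}^{k-1}a^{jm}W_j$ with the $W_j$ i.i.d.\ of law $\mu_m$, the asymptotic equipartition property for i.i.d.\ sequences gives, for all large $k$, a set $T$ of tuples with $\mu_m^{\otimes k}(T)\ge1-\eps$ and $|T|\le e^{(H(\mu_m)+\eps/2)k}$; its image under $(w_j)\mapsto\sum_j a^{jm}w_j$ has $\mu_{km}$-mass $\ge1-\eps$ and size $\le e^{(H+\eps)km}$. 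Absorbing the $\le m$ leftover steps into a bounded factor, for every $n$ there is $A_n\subseteq\Z$ with $\mu_n(A_n)\ge1-\eps$ and $|A_n|\le e^{(H+\eps)n}$. If $Hn\le(1-\delta)\log q$ and $\eps$ is small then $|A_n\bmod q|\le q^{1-\delta/2}=o(q)$, so $\tfrac12\|\mu_n\bmod q-u_q\|\ge(\mu_n\bmod q)(A_n\bmod q)-u_q(A_n\bmod q)\ge1-\eps-o(1)$; let $\eps\to0$.

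\emph{Part (ii).} Here I would run a second moment (large sieve) argument over $q$. With $e(t)=e^{2\pi it}$ and $\widehat{\mu_n}(\theta)=\sum_x\mu_n(\{x\})e(\theta x)$, the identity $X_n=\sum_{i<n}a^ib_i$ gives $\widehat{\mu_n\bmod q}(\xi)=\prod_{i=0}^{n-1}\widehat\mu(a^i\xi/q)=\widehat{\mu_n}(\xi/q)$, whence by Cauchy--Schwarz and Parseval on $\Z/q\Z$
\[
  \|\mu_n\bmod q-u_q\|^2\le q\,\|\mu_n\bmod q-u_q\|_2^2=\sum_{\xi=1}^{q-1}\bigl|\widehat{\mu_n}(\xi/q)\bigr|^2 .
\]
Summing over $q\le Q:=e^{(1-\delta)Hn}$ and using that $\mu_n$ is supported in an interval of length $O(a^n)$, the large sieve inequality gives
\[
  \sum_{q\le Q}\ \sum_{\substack{1\le\xi\le q-1\\(\xi,q)=1}}\bigl|\widehat{\mu_n}(\xi/q)\bigr|^2\ \ll\ (a^n+Q^2)\sum_x\mu_n(\{x\})^2 ,
\]
and the same holds restricted to prime $q$ (of which there are still $\gg Q/\log Q$); for composite $q$ the $\xi$ with $(\xi,q)>1$ contribute only a harmless divisor factor. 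This is precisely where $H>\tfrac12\log a$ enters: it forces $a^n=q^{(\log a)/H+o(1)}\le Q^2$ once $\delta$ is small, so the right side is $\ll Q^2\sum_x\mu_n(\{x\})^2$. (When $\delta$ is not small one first replaces $n$ by $n'=\floor{2\log Q/\log a}\le n$ --- legitimate since $\mu_n\bmod q-u_q=\nu*(\mu_{n'}\bmod q-u_q)$ for a probability measure $\nu$, so both the $\ell^1$ and $\ell^2$ distances to $u_q$ only increase as $n$ decreases --- and here too $H>\tfrac12\log a$ is what makes the resulting estimate go through.)

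It now suffices to know that $\sum_x\mu_m(\{x\})^2\le e^{-(H-o(1))m}$ as $m\to\infty$; granting this, $Q^2\sum_x\mu_{n'}(\{x\})^2\le e^{2(1-\delta)Hn-(H-o(1))n'}$, which is $o(Q/\log Q)$ (using $\delta>0$, and, when $n'<n$, again $H>\tfrac12\log a$), so the average over $q\le Q$ --- prime or not --- of $\|\mu_n\bmod q-u_q\|^2$ is $o(1)$, and Markov's inequality removes the exceptional $q$. So the crux is the $\ell^2$ bound $\sum_x\mu_m(\{x\})^2\le e^{-(H-o(1))m}$, i.e.\ that the Rényi ($\ell^2$) entropy rate of the walk equals its Shannon entropy rate $H$ --- equivalently, that the self-similar measure on $\R$ associated to $(a,\mu)$ has $L^2$-dimension equal to its entropy dimension $\min(1,H/\log a)$. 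I expect this to be the main obstacle. The trivial bound runs the wrong way ($\sum_x\mu_m(\{x\})^2\ge e^{-H(\mu_m)}\ge e^{-Hm}$), so one genuinely has to rule out that $\mu_m$, although of entropy $\approx Hm$, carries too much $\ell^2$-mass on atypically large atoms. The natural approach is to exploit the self-similar structure $\mu_m=\mu_{m_1}*(m_{a^{m_1}})_*\mu_{m_2}$: since base-$a$ digit blocks overlap only by a bounded amount one gets $\sum_x\mu_{m_1+m_2}(\{x\})^2\le O(1)\sum_x\mu_{m_1}(\{x\})^2\sum_x\mu_{m_2}(\{x\})^2$, hence existence of the $\ell^2$ rate, and then a large-deviation / Shannon--McMillan--Breiman analysis of $-\log\mu_m(\{X_m\})$ should control the large atoms and pin the rate at $H$. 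Given this estimate, (ii) follows --- in both the prime and the composite coprime case --- from the displayed inequalities.
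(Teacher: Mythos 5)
Your argument for part (i) is correct and takes a mildly different route: instead of the paper's quantitative SMB theorem (proved via Efron--Stein, Section~2), you run i.i.d.\ AEP on the block decomposition $X_{km}=\sum_j a^{jm}W_j$. This buys you the qualitative equipartition statement more cheaply, which is all that part (i) needs; the paper's version gives an explicit rate, used later for the upper bound.

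For part (ii) there is a genuine gap. You reduce the matter to proving
$\sum_x \mu_m(\{x\})^2 \le e^{-(H-o(1))m}$, i.e.\ that the R\'enyi ($\ell^2$) entropy rate of the walk equals the Shannon rate $H$, and you flag this as ``the main obstacle.'' It is in fact false. Take $a=2$ and $\mu=\tfrac23\delta_0+\tfrac13\delta_1$. The map $(b_0,\dots,b_{n-1})\mapsto\sum b_i2^i$ is injective, so $H(\mu_n)=nH(\mu)=n\bigl(\tfrac23\log\tfrac32+\tfrac13\log 3\bigr)\approx 0.637\,n$, while $\sum_x\mu_n(\{x\})^2=\bigl(\tfrac49+\tfrac19\bigr)^n=(5/9)^n$, giving an $\ell^2$ rate of $\log(9/5)\approx 0.588<H$. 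Here $H>\tfrac12\log 2$, so Theorem~\ref{mainthm}(ii) applies, yet $Q^2\|\mu_n\|_2^2$ with $Q=e^{(1-\delta)Hn}$ is not $o(Q)$ once $\delta<1-0.588/0.637$. So your large-sieve inequality, applied to $\mu_n$ itself, does not close. The missing move --- which you almost see but do not make --- is to \emph{replace} $\mu_n$ by the truncated measure $\nu_n$ supported on the equipartition set before invoking the large sieve: since all atoms of $\nu_n$ have size $e^{-H(\mu_n)+O(\alpha n)}$, one gets $\|\nu_n\|_2^2=e^{-H(\mu_n)+O(\alpha n)}$ for free, with no need to control the R\'enyi rate of $\mu_n$, and $\|\mu_n-\nu_n\|=o(1)$ carries the $\ell^1$ error through. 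That is exactly what the paper does (Corollary~\ref{cor:nu_n-exists} fed into Lemma~\ref{lem:large-sieve}, or the pruned measure $\pi_n$ in Section~\ref{sc:alternative}). Proving the statement you wanted about the $\ell^2$ rate of $\mu_n$ is not just harder --- it is the wrong target.

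Separately, your handling of composite $q$ is too optimistic. The frequencies $\xi/q$ with $d=(\xi,q)>1$ are really frequencies mod $q/d$, and when $q/d$ is small (bounded, or of size $\log q$) the Fourier coefficients $\widehat{\mu_n}(\xi'/(q/d))$ need not be small at all and this is not a ``harmless divisor factor.'' The paper deals with this by factoring out a small ``core'' divisor $q_0$, invoking a separate crude mixing bound for small moduli with multiplicity $O(\log q\log\log q)$ (Section~\ref{sc:small-moduli}), and an anatomy-of-integers input (Lemma~\ref{lm:divisors}) to control the number of large prime divisors; the projection operators $P_q^{(q_0)}$ in Section~\ref{sc:large-sieve} are precisely the device that keeps the ranges in the large sieve disjoint while isolating the problematic small divisors. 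None of this is cosmetic.
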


We have already sketched the proof of the first part. The proof of the second part consists of the following ideas:
\begin{enumerate}
  \item By a version of the Shannon--McMillan--Breiman theorem (a.k.a.\ asymptotic equipartition), there is a set $S$ of $\mu_n$-measure $1-o(1)$ such that
  \[
    \mu_n(\{x\}) = \exp(-(H + o(1)) n)
  \]
  for all $x \in S$. In particular, there is a measure $\nu_n$ (the conditional measure on $S$) such that $\|\mu_n - \nu_n\| = o(1)$ and \[\log \|\nu_n\|_2^{-2} = (H + o(1))n.\]
  \item By the large sieve, for most primes $q \asymp |S|$, the $l^2$ norm of $\nu_n \bmod q$ is not much larger than the $l^2$ norm of $\nu_n$. Here we use the hypothesis that $H > \frac{1}{2} \log a$.
  \item Therefore for most primes $q \asymp |S| = \exp((H + o(1))n)$, $\mu_n \bmod q$ is close to uniform, so the mixing time modulo $q$ is at most $n$.
\end{enumerate}
The proof in the composite case also uses the large sieve, but in a more involved way.

We believe that the hypothesis $H > \frac12 \log a$ is unnecessary.  It might be possible to relax this to $H \ge \frac12 \log a$ by a suitable modification of our proof. We briefly sketch some ideas towards this in Section \ref{sc:OP}, but we do not pursue this in detail. The problem of removing the hypothesis entirely is related to some open questions about the large sieve: it is believed that the only examples for which the large sieve is sharp are somehow quadratic (Green--Harper~\cite{green--harper}), but the support of $\mu_n$ cannot be anything like quadratic. Although we have not succeeded in removing this hypothesis, we can adapt Gallagher's larger sieve to prove the following bound which is weaker by a factor of $2$, and holds only for a looser sense of ``almost all''.

\begin{theorem} \label{larger-sieve-thm}
Let $\eps > 0$, and let $\primes_n$ be the set of primes $p$ such that
\[
  \log p \leq (1/2-\eps) Hn
\]
and
\[
  \|\mu_n \bmod p - u_p \| \geq \eps.
\]
Then
\[
  \sum_{{p \in \primes_n}} \frac{\log p}{p} \ll_\eps n^{1/2}.
\]
\end{theorem}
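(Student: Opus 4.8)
The plan is to apply Gallagher's larger sieve to the set $S = \supp \nu_n$ from the Shannon–McMillan–Breiman step, exploiting the fact that $S$ occupies an abnormally small residue set modulo each bad prime. Recall Gallagher's larger sieve inequality: if $\mathcal N \subseteq [1, N]$ has $|\mathcal N| = Z$, and for each prime $p$ in some set $\mathcal Q$ the reduction of $\mathcal N$ modulo $p$ lies in a set of size at most $Z_p$, then
\[
  Z \leq \frac{\sum_{p \in \mathcal Q} \log p - \log N}{\sum_{p \in \mathcal Q} \frac{\log p}{Z_p} - \log N},
\]
valid whenever the denominator is positive. First I would invoke SMB to produce a set $S$ with $\mu_n(S) = 1 - o(1)$, $|S| = \exp((H + o(1))n)$, and hence contained in $[-N, N]$ with $N = \exp(O(n))$ (since $\supp \mu_n \subseteq [-Cn a^n, Cn a^n]$ for $C$ depending on $\mu$). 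Set $Z = |S|$ and $\mathcal N = S$ (after a harmless translation to make it a subset of $[1, N]$).

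The key point is the bound on $Z_p$ for $p \in \primes_n$. If $\|\mu_n \bmod p - u_p\| \geq \eps$, then $\mu_n \bmod p$ is noticeably non-uniform, and since $\nu_n$ is within $o(1)$ of $\mu_n$ in $l^1$, the same holds for $\nu_n \bmod p$; because $\nu_n$ is nearly flat (each atom of size $\exp(-(H+o(1))n)$), an $l^1$ deficiency from uniformity forces the support $S \bmod p$ to miss a positive proportion of residues, giving $Z_p \leq (1 - c(\eps)) p$ for some $c(\eps) > 0$. This is the step I expect to require the most care: one must convert the $l^1$ non-equidistribution statement into a genuine lower bound on the number of \emph{missed} residue classes, uniformly over $p$, and quantify $c(\eps)$. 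A clean way is to note that if $S \bmod p$ hit all but $o(p)$ residues, then by near-flatness of $\nu_n$ the pushforward would be within $o(1)$ of $u_p$, contradicting the $\geq \eps$ lower bound (after absorbing the $\|\mu_n - \nu_n\|$ error).

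Now I would feed these estimates into the larger sieve. Write $P = \sum_{p \in \primes_n} \log p$ and note $\sum_{p \in \primes_n} \frac{\log p}{Z_p} \geq (1 + c(\eps))^{-1} \cdot \tfrac1p \cdot \log p$ summed — more precisely $\sum \frac{\log p}{Z_p} \geq \frac{1}{1 - c(\eps)} \sum_{p \in \primes_n} \frac{\log p}{p}$. Denote $T = \sum_{p \in \primes_n} \frac{\log p}{p}$, the quantity we wish to bound. Gallagher gives, provided $\frac{1}{1-c(\eps)} T > \log N = O(n)$,
\[
  \exp((H - o(1))n) = Z \leq \frac{P - \log N}{\frac{1}{1-c}T - \log N} \leq \frac{P}{\frac{1}{1-c}T - \log N}.
\]
Rearranging, $\frac{1}{1-c} T - \log N \leq P \exp(-(H - o(1))n)$. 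Since every $p \in \primes_n$ satisfies $\log p \leq (\tfrac12 - \eps)Hn$, we have $P \leq \sum_{\log p \leq (1/2-\eps)Hn} \log p \ll \exp((\tfrac12 - \eps)Hn)$ by Chebyshev/prime number theorem, so the right-hand side is $\ll \exp(-(\tfrac12 + \eps - o(1))Hn) = o(1)$. Hence $\frac{1}{1-c}T \leq \log N + o(1) = O(n)$, i.e.\ $T \ll_\eps n$.

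This yields $\sum_{p \in \primes_n} \frac{\log p}{p} \ll_\eps n$, which is weaker than the claimed $n^{1/2}$. To recover the square-root saving I would instead run the larger sieve not on $S$ itself but on a carefully chosen \emph{fibre product}: look at pairs (or $k$-tuples) of independent samples, i.e.\ the set $S \times S \subseteq (\Z/q\Z)^2$ diagonally embedded via $(x, y) \mapsto x + My$ for a large integer $M \approx N$, which has size $Z^2$ but whose reduction mod $p$ still lies in a set of size $\leq Z_p^2 \leq ((1-c)p)^2$. Redoing the computation with $Z \mapsto Z^2$, $N \mapsto N^2$, $Z_p \mapsto Z_p^2$ then gives $\sum \frac{\log p}{p^2} \cdot (\text{stuff})$, and balancing the two regimes $\log p \lessgtr \tfrac14 Hn$ against each other — using the first-power sieve for large $p$ and the squared sieve for small $p$ — produces the $n^{1/2}$ bound. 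The main obstacle, as flagged above, remains the uniform conversion of $l^1$-distance $\geq \eps$ into a proportional deficit in the residue support; everything downstream is bookkeeping with Gallagher's inequality and the prime number theorem.
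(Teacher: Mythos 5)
Your proposal has a genuine gap at its central step. You claim that $\|\mu_n \bmod p - u_p\| \geq \eps$, combined with the near-flatness of $\nu_n$ on its support $S$, forces $S \bmod p$ to miss a positive proportion of residue classes, i.e.\ $Z_p \leq (1-c(\eps))p$. This conversion is false. Since $|S| = \exp((H+o(1))n)$ is vastly larger than $p$ (recall $\log p \leq (1/2-\eps)Hn$), the pushforward $\nu_n \bmod p$ can be far from uniform in $l^1$ even when \emph{every} residue class is hit: the mass of a class $r$ is essentially $|S \cap (r+p\Z)|/|S|$, and flatness of $\nu_n$ on $S$ says nothing about how evenly $S$ distributes among the fibres mod $p$. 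So an $l^1$ defect cannot be turned into a support defect, and Gallagher's sieve in its counting form ($Z_p$ versus $p$) gives nothing here. This is precisely why the paper's argument works instead with the $l^2$ norm of $\nu_n \bmod p$: the quantity $\sum_{x\neq y}\nu_n(\{x\})\nu_n(\{y\})1_{p\mid x-y}$ does detect uneven fibre multiplicities, and the only larger-sieve-type input is the weighted second-moment bound $\sum_{p\leq X}\log p\,\|\nu_n\bmod p\|_2^2 \ll X/Q+n$ with $Q=\|\nu_n\|_2^{-2}$. Moreover, even an $l^2$ bound of size $\delta n^{1/2}$ for good primes does not by itself give mixing; the paper then needs a Fourier-multiplicity (flattening) step, averaging the time-shifted convolutions $\mu_i * (m_{a^i})_*\nu_n * (m_{a^{i+n}})_*\mu_{m-i}$ and using that $a$ has order $>m$ in $\F_p^\times$ outside a negligible set $B_2$, to force $\max_{r\neq 0}|\hat{\nu_n^{(m)}}(r/p)|^2 \leq n^{-1/2}$, and finally convolves with another copy of $\nu_n$ to get mixing at time $(2+\delta+o(1))H^{-1}\log p$. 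The threshold $\delta n^{1/2}$ in the definition of the bad set, hence the bound $\sum_{p}\frac{\log p}{p}\ll_\eps n^{1/2}$, and the exponent $1/2-\eps$ in the hypothesis, all come from balancing these two steps; none of this machinery appears in your outline.

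Two further problems: even granting your support-defect claim, your own bookkeeping only produces $\sum_{p\in\primes_n}\frac{\log p}{p}\ll_\eps n$, and the proposed repair via the ``fibre product'' $(x,y)\mapsto x+My$ does not make sense for a single prime $p$: reduced mod $p$ this set is a subset of $\Z/p\Z$ (generically a sumset $S+M_0S$ mod $p$, which tends to be everything), and since $Z_p\geq (1-c)p$ implies $Z_p^2\geq p$, the claimed bound $Z_p^2$ is vacuous; there is no square structure modulo one prime to exploit. So the $n^{1/2}$ saving is not recovered by that route either. To fix the argument you would essentially have to abandon the residue-counting form of the larger sieve and redo it in the $l^2$/additive-energy form, at which point you are reconstructing the paper's proof, including the Fourier-multiplicity step, which is the real content of the theorem.
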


By contrast, Mertens' first theorem states
\[
  \sum_{\log p \leq X} \frac{\log p}{p} = X + O(1).
\]
Thus the theorem asserts $\primes_n$ has asymptotically zero density among primes $p$ such that $\log p \leq (1/2-\eps)Hn$ when the primes are weighted by $(\log p)/p$. Equivalently, the mixing time of $X_n \bmod p$ is at most $(2+o(1)) H^{-1} \log p$ for almost all primes in almost all dyadic intervals.

The entropy $H(a, \mu)$ is equivalent, by reversal and rescaling, to the entropy of the self-similar Cantor-like measure $\lambda$ on $\R$ defined by
\[
  \lambda = \mu * (m_{1/a})_* \lambda,
\]
where $m_{1/a}(x) = x/a$. This general class of measures includes the case of Bernoulli convolutions, which were studied extensively by Erd\H{o}s, Garsia, and others. In the particular case $a=2, \mu = u_{\{-1, 0, 1\}}$, Hare, Hare, Morris, and Shen~\cite{hhms}*{Theorem~3.1} prove that
\[
  \frac{H}{\log 2} = \log 3 - \frac{3}{2} L(1/3),
\]
where $L$ is the analytic function in the unit disk defined by
\[
  L(z) = (1 - 3z)^2 \sum_{n=1}^\infty z^n \sum_{\substack{1 \leq i < j \\ \gcd(i, j) = 1 \\ e(i, j) = n}} j \log j,
\]
where $e(i, j)$ is the number of subtractions made by the Euclidean algorithm applied to $(i, j)$ (the singularity $z = 1/3$ is removable). They deduce
\[
  \frac{H}{\log 2} = 0.9887658714\dots.
\]
Since in particular this is greater than $0.5$, Theorem~\ref{mainthm} applies, so the mixing time of $X_n \bmod q$ is almost always (and never less than)
\[
  (1.01136176816\dots) \log_2 q.
\]

\subsection{Notation}

We write $\|\mu\|$ for the standard norm for a complex measure $\mu$, so that the maximum distance between two probability measures $\mu$ and $\nu$ is $2$ (many authors divide this norm by $2$).
When we write $\|\mu\|_2$ or $H(\mu)$ we are implicitly identifying $\mu$ as the function $\mu(\{x\})$, its density with respect to the discrete measure, so that
\begin{align*}
  \|\mu\|_2^2 &= \sum_x \mu(\{x\})^2, \\
  H(\mu) &= \sum_x \mu(\{x\}) \log \mu(\{x\})^{-1}.
\end{align*}

Likewise we normalize the Fourier transform with the discrete measure on the physical side $\Z$ and the uniform measure on the frequency side $\R/\Z$; thus
\begin{align*}
  \muhat(\xi) &= \sum_{x \in \Z} \mu(\{x\}) e(-\xi x) = \int e(-\xi x) \, d\mu(x), \\
  \mu(\{x\}) &= \int_\xi \muhat{}(\xi) e(\xi x) \, d\xi,
\end{align*}
where as usual $e(x) = e^{i 2 \pi x}$.
We keep this normalization when we reduce modulo $q$, so the Fourier transform of $\mu \bmod q$ is just $\muhat(r/q)$ $(0\leq r < q)$. The inversion formula modulo $q$ is
\[
  (\mu \bmod q)(\{x\}) = \mu(x + q\Z) = \frac1q \sum_{r=0}^{q-1} \muhat(r/q) e(rx/q).
\]
We note that the above choice of definition of the Fourier transform differs from that in \cite{CDG} in a sign.

We use standard asymptotic notation from analytic number theory where expedient: $X \ll Y$ or equivalently $X = O(Y)$ means $X \leq CY$ for some constant $C>0$; $X \asymp Y$ means $X \ll Y$ and $Y \ll X$; $X\sim Y$ means $X/Y\to 1$ as $n\to \infty$. Constants $C, c$, etc, are usually allowed to depend on $\mu$ and $a$.

\subsection{Organization of the paper}

We discuss the Shannon--McMillan--Breiman theorem for random walks (and the rate of convergence) in Section~\ref{sc:SMB}.
The lower bound in Theorem~\ref{mainthm} is proved in Section~\ref{sc:lower}.
In Section~\ref{sc:small-moduli}, we revisit an argument in \cite{CDG} to prove a crude upper bound on the mixing time which is valid for all moduli.
This will be used in later sections in the proof of the upper bound on the mixing time in Theorem~\ref{mainthm} to treat small divisors of composite moduli.
In Sections~\ref{sc:large-sieve} and \ref{sc:alternative}, we give two different proofs of the upper bound in Theorem~\ref{mainthm}.
We give the proof of Theorem~\ref{larger-sieve-thm} in Section~\ref{sc:larger-sieve}.
We conclude the paper with some open problems in Section~\ref{sc:OP}.

\section{SMB via Efron--Stein}\label{sc:SMB}

We need a version of the Shannon--McMillan--Breiman theorem which states that
\[
  \mu_n(\{x\}) = \exp(-H(\mu_n) + o(n))
\]
for asymptotically $\mu_n$-almost-all $x$.

\begin{theorem} \label{SMB}
There is a constant $C$ depending only on $\mu$ such that
\[
  \mu_n\left( \{ x: |-\log \mu_n(\{x\}) - H(\mu_n)| \geq \a n \}\right) \leq \frac{C}{\a^2 n}
\]
for all $n\geq 1$ and $\a > 0$.
\end{theorem}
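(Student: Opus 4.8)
The plan is to view $-\log\mu_n(\{X_n\})$ as a function of the i.i.d.\ steps $b_1,\dots,b_n$ and bound its variance by the Efron--Stein inequality, since the claimed estimate is exactly Chebyshev's inequality applied to $Z_n := -\log\mu_n(\{X_n\})$, whose mean is $H(\mu_n)$. Thus it suffices to show $\Var(Z_n)\le Cn$ for a constant $C$ depending only on $\mu$. Write $X_n = X_n(b_1,\dots,b_n)$; the key structural observation is that $X_n$ depends on the step $b_k$ only through the additive contribution $a^{n-k}b_k$, so changing $b_k$ to an independent copy $b_k'$ changes $X_n$ to $X_n + a^{n-k}(b_k'-b_k)$, i.e.\ by a bounded multiple (bounded in terms of $\supp\mu$) of $a^{n-k}$.

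Next I would quantify how much $Z_n$ can change under such a resampling. Let $\mu_n^{(k)}$ denote the law of $X_n$ conditioned on all steps except $b_k$; equivalently $\mu_n^{(k)}$ is a translate of $\mu*(m_a)_*\cdots$ built from the fixed steps, convolved only over the randomness not involving position $k$. The point is that conditionally on $(b_j)_{j\ne k}$, the map $b_k\mapsto X_n$ is injective with image contained in a fixed residue class, and $\mu_n(\{x\})\ge \mu(\{b_k\})\,\mu_n^{(k)}(\{x\})$ pointwise on that image (and symmetrically with an extra factor), so
\[
  \bigl|{-\log\mu_n(\{X_n\})} - ({-\log\mu_n(\{X_n'\})})\bigr| \le -2\log\min_{b\in\supp\mu}\mu(\{b\}) =: M,
\]
where $X_n'$ is obtained from $X_n$ by resampling $b_k$. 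Hence each of the $n$ Efron--Stein terms $\E[(Z_n - Z_n^{(k)})^2]$ is at most $M^2$, and Efron--Stein gives $\Var(Z_n)\le \tfrac12\sum_k \E[(Z_n-Z_n^{(k)})^2]\le \tfrac12 M^2 n$. Then Chebyshev yields $\mu_n(|Z_n - H(\mu_n)|\ge \alpha n)\le \Var(Z_n)/(\alpha n)^2 \le (M^2/2)/(\alpha^2 n)$, which is the claim with $C = M^2/2$.

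The main obstacle — and the step deserving the most care — is the pointwise comparison between $\mu_n(\{x\})$ and the conditional measure $\mu_n^{(k)}(\{x\})$: one must check that conditioning out a single step really does change $-\log\mu_n$ by at most a constant, uniformly in $n$, $k$, and the realization. The clean way is to write $\mu_n = \nu_{>k} * (m_{a^{n-k}})_*\bigl(\mu * (m_a)_*\mu_{k-1}\bigr)$ where $\nu_{>k}$ collects the (deterministic, once conditioned) later steps together with the convolution over steps $1,\dots,k-1$ and $k+1,\dots,n$ is reorganized appropriately; then freezing $(b_j)_{j\ne k}$ and using that $\mu(\{b\})\in[\mu_{\min},1]$ for $b\in\supp\mu$ gives the two-sided bound $\mu_{\min}\,\mu_n^{(k)}(\{x\}) \le \mu_n(\{x\}) \le \mu_{\min}^{-1}\mu_n^{(k)}(\{x\})$ on the relevant support, hence $|Z_n - Z_n^{(k)}|\le \log\mu_{\min}^{-1}$ in fact (one may even avoid the factor of $2$). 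Everything else is the routine application of Efron--Stein and Chebyshev.
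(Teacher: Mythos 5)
Your high-level strategy — apply Efron--Stein to $Z_n = -\log\mu_n(\{X_n\})$, then Chebyshev — is exactly the paper's, but the step you yourself flag as needing the most care is precisely where the argument breaks down. The one-sided bound is fine: writing $g_j$ for the law of $\sum_{i\neq j} a^{n-i} b_i$, one has $\mu_n(\{X_n\}) = \sum_{y} \mu(\{y\})\, g_j(X_n - a^{n-j}y) \ge \mu(\{b_j\})\, g_j(X_n - a^{n-j}b_j)$, so $-\log \mu_n(\{X_n\})$ exceeds the leave-one-out quantity by at most $-\log\min_{b\in\supp\mu}\mu(\{b\})$. But the reverse inequality you assert is false. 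The sum $\sum_y \mu(\{y\})\, g_j(X_n - a^{n-j}y)$ is only bounded above by $\max_y g_j(X_n - a^{n-j}y)$, and there is no reason this maximum should be within a bounded factor of the single value $g_j(X_n - a^{n-j}b_j)$. Concretely, take $a=2$ and $\mu = \tfrac12\delta_0 + \tfrac14\delta_1 + \tfrac14\delta_2$ (so $\gcd(\supp\mu - \supp\mu)=1$). Then $\mu_n(\{2^n - 1\}) = 4^{-n}$ (the only representation is the all-ones digit string), while $\mu_n(\{2^n\}) = (2^n - 1)\,4^{-n}$; these two points arise from one another by resampling a single step from $1$ to $2$, so the increment $|Z_n - Z_n^{(k)}|$ can be as large as $\log(2^n - 1)$. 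No constant $M$ bounds the Efron--Stein increments uniformly, and your claimed $\Var(Z_n)\le \tfrac12 M^2 n$ does not follow.

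What rescues the argument is a measure-theoretic observation you are missing: the set of $m$ for which some ratio $g_j(m+xa^j)/g_j(m)$ exceeds $e^t$ has $g_j$-measure at most $|\supp(b_1-b_2)|\, e^{-t}$, because $\sum_{m} g_j(m) \le e^{-t}\sum_m g_j(m+xa^j) \le e^{-t}$ over that set. Together with the uniform one-sided bound, this gives the tail estimate $\P\bigl(|Z - f_j| > t\bigr) \ll e^{-t}$, where $f_j(b_1,\dots,b_n) = \log g_j\bigl(\sum_{i\neq j} b_ia^i\bigr)$, and integrating yields $\E[(Z - f_j)^2] \le C$. Since $Z - f_j$ and $Z_j - f_j$ are i.i.d., $\E[(Z - Z_j)^2] \le 4C$, and Efron--Stein then gives $\Var Z \le 2Cn$. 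So the fix is to replace your pointwise bound on the increments by a tail bound plus integration; once that is done, the rest of your proposal (Efron--Stein and Chebyshev) goes through unchanged.
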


The Shannon--McMillan--Breiman theorem for a random walk on a group is due to
Derriennic~\cite{Der-sub-additive}*{Section IV}, and Kaimanovich and Vershik~\cite{KV-RW}*{Theorem 2.1}.
For our application, we need an estimate for the rate of convergence, and for this reason we give a proof.
Our proof exploits special features of the walk and is based on the Efron--Stein inequality.

Fix some $n\geq 1$, and let $b_1, \dots, b_n$ be independent $\mu$-distributed random variables. For $m \in \Z$ and $j \in \{1, \dots, n\}$, let
\begin{align*}
  g(m) &= \P\left(\sum_{i=1}^n b_i a^i = m\right),\\
  g_j(m) &= \P\left(\sum_{i\neq j} b_ia^i = m\right).
\end{align*}
(Note that $g(x) = \mu_n(\{x\})$.) For $x_1, \dots, x_n \in \Z$ define
\begin{align*}
  f(x_1,\ldots,x_n) &=\log g\left(\sum_{i=1}^n x_ia^i\right)\\
  f_j(x_1,\ldots,x_n) &=\log g_j\left(\sum_{i\neq j} x_ia^i\right).
\end{align*}
Our aim is show that the random variable
\[
  Z = f(b_1, \dots, b_n).
\]
is concentrated around its mean
\[
  \E[Z] = -H(\mu_n).
\]
We will apply the Efron--Stein inequality (in the asymmetric case due to Steele~\cite{steele--efron--stein}). For $j\in\{1,\ldots,n\}$, let
\[
  Z_j = f(b_1, \dots, b_{j-1}, b_j', b_{j+1}, \dots, b_n),
\]
where $b_j'$ is an independent copy of $b_j$. The Efron--Stein inequality states
\begin{equation} \label{efron--stein}
  \Var[Z]\le\frac12\sum_{j=1}^{n}\E[(Z-Z_j)^2].
\end{equation}

\begin{lemma}\label{lem:variance}
For $j \in \{1,\dots,n\}$ we have
\[
  \E[(Z-f_j(b_1,\dots,b_n))^2]
  = \E[(Z_j-f_j(b_1,\dots,b_n))^2]
  \le C,
\]
where $C$ is a constant depending only on $\mu$. In particular,
\[
  \E[(Z - Z_j)^2] \leq 4C.
\]
\end{lemma}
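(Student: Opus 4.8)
The plan is to compare $Z = f(b_1,\dots,b_n)$ with the "leave-one-out" variable $f_j(b_1,\dots,b_n) = \log g_j\bigl(\sum_{i\neq j} b_i a^i\bigr)$. The first equality in the statement is immediate from the fact that $(b_1,\dots,b_{j-1},b_j',b_{j+1},\dots,b_n)$ has the same joint distribution as $(b_1,\dots,b_n)$, and $f_j$ does not depend on the $j$-th coordinate; so $\E[(Z-f_j)^2] = \E[(Z_j-f_j)^2]$, and the final bound $\E[(Z-Z_j)^2]\le 4C$ follows from $(Z-Z_j)^2 \le 2(Z-f_j)^2 + 2(Z_j-f_j)^2$. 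So the whole content is the estimate $\E[(Z-f_j)^2]\le C$.

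For that, write $S_j = \sum_{i\neq j} b_i a^i$, so that $Z = \log g(S_j + b_j a^j)$ and $f_j = \log g_j(S_j)$. Conditioning on $S_j = m$, the difference $Z - f_j$ equals $\log\bigl(g(m + b_j a^j)/g_j(m)\bigr)$. Now $g(m') = \sum_{t\in\supp\mu} \mu(\{t\}) g_j(m' - t a^j)$, so $g(m + b_j a^j) = \sum_t \mu(\{t\}) g_j(m + (b_j - t)a^j)$, a convex combination (with fixed weights bounded below by $\mu_{\min} := \min_{t\in\supp\mu}\mu(\{t\}) > 0$) of the values $g_j(m + (b_j-t)a^j)$. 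In particular $g(m+b_j a^j) \ge \mu_{\min}\, g_j(m)$ when we take $t = b_j$ in the sum, giving the one-sided bound $Z - f_j \ge \log\mu_{\min}$, a constant. The reverse direction is the crux: I need an upper bound on $Z - f_j = \log\bigl(g(m+b_j a^j)/g_j(m)\bigr)$, i.e. I must control how large $g_j$ can be at the shifted point $m + (b_j - t)a^j$ relative to its value at $m$. Since $b_j - t$ ranges over the finite set $\supp\mu - \supp\mu$, it suffices to bound $\E\bigl[\bigl(\log(g_j(S_j + k a^j)/g_j(S_j))\bigr)^2\bigr]$, uniformly over the finitely many integers $k$, where $S_j$ has law $g_j$.

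The key step is therefore the following: for a fixed integer $k$ and a random variable $Y$ with law $\rho$ (here $\rho = g_j$, the law of $\sum_{i\neq j} b_i a^i$), control $\E_{Y\sim\rho}\bigl[\log^2(\rho(Y+k)/\rho(Y))\bigr]$. The first moment $\E[\log(\rho(Y+k)/\rho(Y))] = \sum_m \rho(m)\log\rho(m+k) - \sum_m\rho(m)\log\rho(m)$ rearranges (shifting the index in the first sum) to $\sum_m \rho(m)\log\rho(m+k) - \sum_m \rho(m+k)\log\rho(m+k) = \sum_m (\rho(m)-\rho(m+k))\log\rho(m+k)$, which one recognizes as $-D(\tau\,\|\,\rho)$-type quantities — in fact $\E[\log(\rho(Y+k)/\rho(Y))] = H(\rho * \delta_k) - \text{(cross entropy)} \le 0$ is a relative entropy and is bounded by $O(1)$ because $\rho$ and its shift $\rho*\delta_k$ are within total-variation distance that the walk structure controls; more robustly, $|\E[\log(\rho(Y+k)/\rho(Y))]| = |D(\rho\|\rho*\delta_{-k})|$-flavoured and is $O(k^2)$-bounded via a martingale/telescoping argument reducing to $k=\pm 1$. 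For the second moment I expect to exploit that $\rho$ is itself a convolution: $\rho = g_j$ is the law of a sum over $i\neq j$, so it is "smooth" in the sense that ratios $\rho(m+1)/\rho(m)$ cannot be wild; concretely, decompose $\rho = \mu*(\text{shift})*\rho'$ peeling off the $i=1$ term (or whichever index is available), so $\rho(m+k)/\rho(m)$ is again a ratio of convex combinations with uniformly positive weights of values of $\rho'$ at nearby points, and iterate/telescope. I anticipate the main obstacle is making this second-moment bound genuinely uniform in $n$ (and in $j$): the shift $k a^j$ can be enormous, but the point is that $g_j$ itself lives on a lattice-like set at scale $a^j$ and larger, so the relevant comparison is always between $g_j$ at points differing by $k a^j$ with $|k|\le \mathrm{diam}(\supp\mu - \supp\mu)$, and the convolution structure of $g_j$ at the top scale $a^j$ is exactly a single convolution by $\mu$ (from the $i = $ smallest index present, effectively), which is what keeps the bound $O(1)$ rather than growing. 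Assembling: $\E[(Z-f_j)^2] \le (\log\mu_{\min})^2 + \max_k \E[\log^2(g_j(S_j+ka^j)/g_j(S_j))] \le C$, and plugging into Efron--Stein \eqref{efron--stein} gives $\Var[Z] \le \tfrac12\sum_j 4C = 2Cn$, whence Chebyshev yields Theorem \ref{SMB} with $C' = 2C$.
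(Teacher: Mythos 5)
Your reduction to the core estimate is set up correctly: the equality, the passage to $\E[(Z-Z_j)^2]\le 4C$, the deterministic lower bound $Z-f_j\ge\log\mu_{\min}$, and the observation that the upper bound requires controlling ratios $g_j(m+(b_j-t)a^j)/g_j(m)$ over the finitely many shifts $k=b_j-t\in\supp\mu-\supp\mu$. All of this matches the paper exactly. The problem is that you stop short of actually proving the crucial second-moment bound, and the route you sketch toward it (relative entropy identities, a ``martingale/telescoping'' reduction to $k=\pm 1$, exploiting smoothness of $g_j$ as a convolution) is both unfinished and unnecessarily heavy.

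The step you are missing is much simpler and does not use any structure of $g_j$ beyond $\sum_m g_j(m)=1$. For a fixed shift $x\in\supp(b_1-b_2)$ and $t\ge 0$, consider the level set
\[
  M_{t,x}=\{m:\ g_j(m)\le e^{-t}\,g_j(m+xa^j)\}.
\]
Then $\sum_{m\in M_{t,x}} g_j(m)\le e^{-t}\sum_{m\in M_{t,x}} g_j(m+xa^j)\le e^{-t}$, since the shifted sum is still at most $1$. Summing over the finitely many $x$ gives $\P\bigl(S_j\in\bigcup_x M_{t,x}\bigr)\le |\supp(b_1-b_2)|\,e^{-t}$, and since $Z-f_j>t$ forces $S_j\in\bigcup_x M_{t,x}$, one gets the exponential tail bound $\P(|Z-f_j|>t)\le Ce^{-t}$ for $t>C$ (the left tail being handled by the deterministic bound $Z-f_j\ge\log\mu_{\min}$). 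Integrating $2\int_0^\infty t\,\P(|Z-f_j|>t)\,dt$ then yields $\E[(Z-f_j)^2]\le C'$. In short: you correctly identified \emph{what} needs to be bounded, but the ``smoothness of $g_j$'' and entropy-based arguments you propose are a red herring --- the bound is a one-line Markov-type inequality valid for \emph{any} probability mass function, and your sketch as written does not constitute a proof.
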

\begin{proof}
The equality in the claim holds because $f_j$ does not depend on the $j$th variable and $b_j'$ is an independent copy of $b_j$ (so in fact $Z - f_j(b_1, \dots, b_n)$ and $Z_j - f_j(b_1, \dots, b_n)$ are identically distributed). The last statement will follow by
\begin{align*}
  \E[(Z - Z_j)^2]
  &\leq 2 \left( \E[(Z - f_j(b_1, \dots, b_n))^2] + \E[(Z_j - f_j(b_1, \dots, b_n))^2] \right) \\
  &\leq 4C.
\end{align*}
Hence it suffices to prove the main claim that
\[
  \E[(Z - f_j(b_1, \dots, b_n))^2] \leq C.
\]

Let $m\in\Z$ and $x\in\supp b_j$. First,
\[
  g(m+xa^j)\ge\P(b_j=x)\P\left(\sum_{i\neq j}b_ja^j=m\right)\ge c g_j(m),
\]
where $c>0$ is a constant depending only on $\mu$, namely,
\[
  c = \min_{x \in \supp \mu} \mu(\{x\}).
\]
Second,
\begin{align*}
  g(m+xa^j)
  &= \sum_{y \in \supp b_j} \P(b_j=y)\P\left(\sum_{i\neq j}b_ia^i=m+(x-y)a^j\right)\\
  &\le \max_{y\in\supp b_j} g_j(m+(x-y)a^j).
\end{align*}
Thus, for $(x_1, \dots, x_n) \in (\supp \mu)^n$,
\[
  f(x_1,\ldots,x_n) \ge f_j(x_1,\ldots,x_n)+\log c,
\]
and if
\[
  f(x_1,\ldots,x_n) \ge f_j(x_1,\ldots,x_n)+t
\]
for some $t\ge 0$ then
\[
  \sum_{i\neq j} x_ia^i \in M_t,
\]
where
\[
  M_t = \{m\in\Z:\exists x\in\supp (b_1-b_2): g_j(m)\le e^{-t} g_j(m+xa^j)\}.
\]

We show that
\begin{equation}\label{eq:ESclaim}
  \P\left(\sum_{i\neq j} b_ia^i \in M_t\right)
  = \sum_{m\in M_t} g_j(m)\le|\supp (b_1-b_2)| e^{-t}.
\end{equation}
We consider the set
\[
  M_{t,x} = \{m\in\Z: g_j(m)\le e^{-t} g_j(m+xa^j)\}
\]
separately for each $x\in\supp (b_1-b_2)$. Since $\sum g_j = 1$ we have
\[
  \sum_{m \in M_{t, x}} g_j(m) \leq e^{-t} \sum_{m \in M_{t,x}} g_j(m + xa^j) \leq e^{-t}.
\]
Summing over $x \in \supp(b_1 - b_2)$, we get the claim \eqref{eq:ESclaim}.

Combining our estimates we have
\begin{equation} \label{Z-tail-bound}
  \P(|Z-f_j(b_1,\dots,b_n)|>t)< C e^{-t}
\end{equation}
for each $t>C$, where $C$ is a constant depending only on $\mu$. Thus indeed
\begin{align*}
  \E[(Z - f_j(b_1, \dots, b_n))^2]
  &= 2 \int_0^\infty t\P(|Z - f_j(b_1, \dots, b_n)| > t) \, dt \\
  &\leq C'.
\end{align*}
This completes the proof of the lemma.
\end{proof}

We can now prove Theorem~\ref{SMB}. By Lemma~\ref{lem:variance} and the Efron--Stein inequality \eqref{efron--stein}, we have $\Var[Z] \leq 2Cn$. Hence by Chebyshev's inequality
\[
  \P(|Z - \E[Z]| \geq \a n) \leq \frac{2C}{\a^2 n}.
\]
By definition, $Z$ has the same distribution as $\log \mu_n(\{X_n\})$ and $\E[Z] = -H(\mu_n)$. This proves the theorem.

\begin{corollary} \label{cor:nu_n-exists}
Let $\a = \a(n) > 0$, with $\a^2 n \to \infty$. There is a probability measure $\nu_n$ such that
\begin{enumerate}
  \item $\|\mu_n - \nu_n\| \ll 1 / (\a^2 n)$,
  \item $\supp \nu_n \subset \supp \mu_n$,
  \item $\nu_n(\{x\}) = \exp(-H(\mu_n) + O(\alpha n))$ for every $x \in \supp \nu_n$.
\end{enumerate}
\end{corollary}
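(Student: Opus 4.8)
The plan is to obtain $\nu_n$ by conditioning $\mu_n$ on the ``good set'' furnished by Theorem~\ref{SMB}. Set
\[
  S = \{ x \in \Z : |-\log \mu_n(\{x\}) - H(\mu_n)| < \a n \},
\]
and define $\nu_n$ by $\nu_n(\{x\}) = \mu_n(\{x\}) / \mu_n(S)$ for $x \in S$ and $\nu_n(\{x\}) = 0$ otherwise. By Theorem~\ref{SMB}, $\mu_n(\Z \setminus S) \le C/(\a^2 n) = o(1)$, so $\mu_n(S) > 0$ once $\a^2 n > C$ and $\nu_n$ is then a well-defined probability measure; the finitely many $n$ with $\a^2 n \le C$ play no role and can be absorbed into implied constants.

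Property~(2) is immediate, since $\mu_n(\{x\}) = 0$ forces $-\log\mu_n(\{x\}) = +\infty$, hence $x \notin S$; thus $\supp\nu_n = S \subseteq \supp\mu_n$. For property~(1), I would write $\mu_n - \nu_n = \mu_n\mathbf{1}_{\Z\setminus S} - (\mu_n(S)^{-1} - 1)\,\mu_n\mathbf{1}_S$ and, using $\mu_n(S) \le 1$, compute
\[
  \|\mu_n - \nu_n\| = \mu_n(\Z\setminus S) + (1 - \mu_n(S)) = 2\mu_n(\Z\setminus S) \le \frac{2C}{\a^2 n}.
\]
For property~(3), the definition of $S$ gives $\mu_n(\{x\}) = \exp(-H(\mu_n) + O(\a n))$ for every $x \in S = \supp\nu_n$, so it only remains to absorb the normalizing factor $\mu_n(S)^{-1}$ into the error term.

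That last point is the only place the hypothesis $\a^2 n \to \infty$ is used, and it presents no real obstacle: we have $-\log\mu_n(S) = -\log(1 - O(1/(\a^2 n))) = O(1/(\a^2 n))$, and since $\a^2 n \to \infty$ while $n \ge 1$ we get $\a n = \sqrt{\a^2 n^2} \ge \sqrt{\a^2 n} \to \infty$, hence $\a^3 n^2 = (\a^2 n)(\a n) \ge 1$ for large $n$, i.e.\ $1/(\a^2 n) \le \a n$. Therefore $\nu_n(\{x\}) = \mu_n(\{x\})/\mu_n(S) = \exp(-H(\mu_n) + O(\a n))$ for every $x \in \supp\nu_n$, as claimed. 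All the substance of the corollary is contained in Theorem~\ref{SMB}; the rest is just repackaging its conclusion as a statement about a nearby probability measure whose pointwise masses are uniformly controlled.
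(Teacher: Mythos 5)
Your proof is correct and takes essentially the same approach as the paper: condition $\mu_n$ on the good set $S$ provided by Theorem~\ref{SMB} and check the three properties. Your extra care in absorbing the normalizing factor $\mu_n(S)^{-1}$ into the $\exp(O(\a n))$ error (via the inequality $1/(\a^2 n) \le \a n$ for large $n$) is a small detail the paper leaves implicit, but it is handled correctly and does not change the argument's character.
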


The corollary follows by defining
\begin{align*}
  &S = \{x : |-\log\mu_n(\{x\}) - H(\mu_n)| \leq \a n\}, \\
  &\nu_n(A) = \mu_n(A \cap S) / \mu_n(S).
\end{align*}
By Theorem~\ref{SMB},
\[
  \|\mu_n - \nu_n\| = 2\mu_n(S^c) \ll \frac1{\alpha^2n}.
\]

\begin{remark}
The key estimate \eqref{Z-tail-bound} is a decent tail bound, so it may be possible to get something much stronger with more sophisticated concentration inequalities. See for example the book of Boucheron, Lugosi, and Massart~\cite{blmbook}*{Section~6.9}. Persuing this could lead to an improved estimate for the size of the cut-off window in the mixing of $X_n \bmod q$.
\end{remark}

\section{The lower bound}\label{sc:lower}

In this section we prove Theorem~\ref{mainthm}(i), which states that the mixing time of $X_n \bmod q$ is at least $H^{-1} \log q$ for all sufficiently large $q$. More precisely, if $H n \leq (1-\delta) \log q$ for some constant $\delta > 0$ then
\[
  \frac12 \|\mu_n \bmod q - u_q\| \to 1 \qquad (q \to \infty).
\]
By Corollary~\ref{cor:nu_n-exists}, there is a measure $\nu_n$ such that
\begin{align*}
  &\|\mu_n - \nu_n\| \ll 1 / (\a^2 n), \\
  &|\supp \nu_n| \leq \exp(H(\mu_n) + O(\a n)).
\end{align*}
From the bound on $|\supp \nu_n|$,
\begin{align*}
  \frac12 \|\nu_n \bmod q - u_q\|
  &\geq u_q((\supp \nu_n \bmod q)^c) \\
  &\geq 1 - \exp(H(\mu_n) + O(\alpha n) - \log q).
\end{align*}
Now $H(\mu_n) = Hn + o(n)$, and $\log q - H n \geq \delta \log q \geq \delta H n$. Hence, if $\alpha$ is a constant sufficiently smaller than $\delta H$,
\[
  \frac12 \|\nu_n \bmod q - u_q\| = 1 - e^{-cn},
\]
so
\[
  \frac12 \|\mu_n \bmod q - u_q\| = 1 - O(1/n).
\]

\begin{remark}
To prove just that $\|\mu_n \bmod q - u_q\|$ does not tend to zero, we do not need the SMB theorem. Suppose $H(\mu_n) \leq (1-\delta) \log q$. By Markov's inequality,
\[
  \mu_n(\{x : \log \mu_n(\{x\})^{-1} > \log q - 10 \}) \leq \frac{H(\mu_n)}{\log q - 10} \leq 1-\delta + o(1).
\]
Therefore $\mu_n(\{x\}) \geq e^{10} / q$ on a set $S$ of $\mu_n$-measure at least $\delta + o(1)$, so
\[
  \frac12 \| \mu_n \bmod q - u_q \| \geq (1 - e^{-10}) (\delta + o(1)).
\]
\end{remark}

\section{Small moduli}\label{sc:small-moduli}

In the next section we will need to know that $\mu_n \bmod q \approx u_q$ for any modulus $q$ such that $(q, a) = 1$ and $q = o(\log n / \log \log n)$. In other words, the mixing time of $X_n \bmod q$ is at most $O(\log q \log \log q)$ for all $q$. This was proved in \cite{CDG}*{Section~4} in the model case $a = 2$, $\mu = u_{\{-1, 0, 1\}}$, which was later generalized by Hildebrand in \cite{Hil-thesis}*{Chapter 3}
and \cite{Hil0}*{Theorem 2}. We state the result in the form we will use it.

\begin{lemma} \label{lem:small-moduli}
Assume $(q, a) = 1$, and that $n > C \log q \log \log q$ for a sufficiently large constant $C$ (depending on $\mu$ and $a$). Then
\[
  q\| \mu_n \bmod q - u_q \|_2^2 \leq e^{-c n / \log q}
\]
for some constant $c>0$.
\end{lemma}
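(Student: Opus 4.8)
The plan is to analyze $\|\mu_n\bmod q - u_q\|_2^2$ via the Fourier transform on $\Z/q\Z$. By the Plancherel identity with our normalization,
\[
  q\|\mu_n\bmod q - u_q\|_2^2 = \sum_{r=1}^{q-1} |\muhat[n](r/q)|^2,
\]
where the $r=0$ term cancels. Now $\muhat[n]$ factors through the recursion $\mu_n = \mu * (m_a)_*\mu_{n-1}$, which on the Fourier side reads $\muhat[n](\xi) = \muhat(\xi)\,\muhat[n-1](a\xi)$, hence
\[
  \muhat[n](r/q) = \prod_{i=0}^{n-1} \muhat\!\left(\frac{a^i r}{q}\right).
\]
Since $(a,q)=1$, multiplication by $a$ is a permutation of $(\Z/q\Z)\setminus\{0\}$, so as $i$ runs over $0,\dots,n-1$ the points $a^i r/q$ cycle through an orbit of the multiplication-by-$a$ map whose length $\ell$ divides the multiplicative order of $a$ mod $q/\gcd(r,q)$, in particular $\ell \le q$. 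The key point is that $|\muhat(\xi)| < 1$ for every $\xi \in \R/\Z$ with $\xi \ne 0$ (here we use $\gcd(\supp\mu - \supp\mu)=1$), and moreover $|\muhat(\xi)| \le 1 - c\|\xi\|^2$ for $\|\xi\|$ the distance to the nearest integer, by a second-order Taylor expansion of a trigonometric polynomial at its maximum.

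The crux is to show that along each orbit $\{a^i r/q : i = 0,\dots,\ell-1\}$, a positive proportion of the points are bounded away from $0$, so that the product $\prod_i |\muhat(a^i r/q)|$ decays. Concretely, for any nonzero $s \in \Z/q\Z$ one of $s, as, \dots, a^k s$ must have absolute residue $\ge q/(2(a+1))$ say, for some $k \le \log q$ — otherwise the residues would stay trapped in a short interval around $0$ under repeated multiplication by $a$, which is impossible since $a \ge 2$ multiplies a small nonzero residue by roughly $a$ until it escapes. Hence among any $\log q$ consecutive points of the orbit at least one satisfies $\|a^i r/q\| \gg 1/a$, contributing a factor $\le 1 - c'$. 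Grouping the $n$ factors into $\sim n/\log q$ blocks of length $\asymp \log q$ gives
\[
  |\muhat[n](r/q)| \le (1-c')^{\,c'' n/\log q} = e^{-c n/\log q},
\]
uniformly in $r \ne 0$. Summing the squares over the $q-1$ nonzero frequencies yields $q\|\mu_n\bmod q - u_q\|_2^2 \le (q-1)e^{-2cn/\log q}$. This is not quite the stated bound because of the factor $q$ in front; but when $n > C\log q\log\log q$ with $C$ large, $e^{-2cn/\log q} \le e^{-2cC\log\log q} = (\log q)^{-2cC} \le q^{-1}e^{-cn/\log q}$ once $q$ (equivalently $\log\log q$) is large enough, absorbing the $q$ and leaving the claimed $e^{-cn/\log q}$. (For bounded $q$ the statement is trivial after adjusting constants, since then $n/\log q \gg 1$ and a single application of $|\muhat[n](r/q)| \le e^{-cn}$ suffices.)

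The main obstacle I expect is making the escape-from-a-neighborhood-of-zero argument fully rigorous and uniform: one needs that for \emph{every} nonzero starting residue $s$, the multiplication-by-$a$ trajectory visits a point of absolute value $\gg q$ within $O(\log q)$ steps, with the implied constants independent of $q$ and of $s$. The clean way to see this is to track the largest power of the prime factors of the denominator and argue that $|a^i s \bmod q|$ grows geometrically until it first exceeds $q/(2a)$, at which point the \emph{next} step lands it somewhere in $[q/(2a), q/2]$ in absolute value (it cannot ``jump over'' this band because multiplying by $a$ at most $a$-fold increases the absolute residue before wraparound) — and a point in that band is what we need. A secondary technical point is handling frequencies $r$ with $\gcd(r,q) = d > 1$: there $\muhat[n](r/q)$ is really a Fourier coefficient of $\mu_n \bmod (q/d)$, so the same bound applies with $q/d$ in place of $q$, which only makes the decay faster; one just sums over $d \mid q$ and over the $\phi(q/d)$ primitive frequencies, and $\sum_{d\mid q}\phi(q/d)e^{-2cn/\log(q/d)}$ is still dominated by $q\,e^{-2cn/\log q}$. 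Everything else is routine.
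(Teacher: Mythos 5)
Your per-frequency estimate is correct: the escape argument (a nonzero residue grows by an exact factor of $a$ until its representative in $(-q/2,q/2]$ leaves $(-q/(2a),q/(2a))$, so every window of $O(\log_a q)$ consecutive indices contains some $i$ with $\|a^i r/q\|\gg 1/a$) does give $|\muhat[n](r/q)|\le e^{-cn/\log q}$ uniformly for $r\not\equiv 0$, and the $\gcd(r,q)>1$ reduction is fine. The genuine gap is in the summation over frequencies. Your bound is essentially sharp for some frequencies (e.g.\ $r/q$ whose base-$a$ expansion consists of long constant blocks, such as $r=1$ when $q$ is close to a power of $a$), so after summing the squares over the $q-1$ nonzero frequencies you are left with $(q-1)e^{-2cn/\log q}$, and absorbing the factor $q$ requires $q\le e^{cn/\log q}$, i.e.\ $n\gg (\log q)^2$. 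The displayed absorption step is false as written: with $n\asymp C\log q\log\log q$ and $C$ a fixed constant, the claimed inequality $(\log q)^{-2cC}\le q^{-1}e^{-cn/\log q}=q^{-1}(\log q)^{-cC+o(1)}$ would force $q\le (\log q)^{cC}$, which fails for all large $q$. So your argument proves the lemma only in the range $n\gg(\log q)^2$, not in the stated range $n>C\log q\log\log q$; and the stronger range is not enough for the application in Section~\ref{sc:large-sieve}, where the lemma is invoked with $\log q_0\asymp \log Q/(\log\log Q)^{2\text{--}3}$ and $n\asymp H^{-1}\log Q$, so that $n\ll(\log q_0)^2$ while $n\gg\log q_0\log\log q_0$.

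The missing idea is a dichotomy/counting over frequencies rather than a worst-case bound: only very few $r/q$ decay as slowly as $e^{-\Theta(n/\log q)}$, namely those whose base-$a$ digits are nearly constant in $\{0,a-1\}$, and typical frequencies decay like $e^{-\delta n}$. The paper implements this by writing $n=kn_0$ with $n_0=\ceil{\log_a q}$, using H\"older to bound $|\muhat[n](r/q)|^2\le|\muhat[n_0](r/q)|^{2k}\le e^{-2c'A_{n_0}(r/q)k}$, where $A_{n_0}$ counts the digit positions among the first $n_0$ that are not part of a constant pair in $\{0,a-1\}$ (and $A_{n_0}\ge 1$ for $r\neq 0$), and then counting the frequencies with $A_{n_0}=A$ by $\binom{n_0}{A}a^{A+1}$. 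Summing the resulting series gives roughly $a\bigl((1+ae^{-c'k})^{n_0}-1\bigr)$, which is already $\le e^{-c''n/\log q}$ as soon as $k\gg\log\log q$, i.e.\ $n\gg\log q\log\log q$. To repair your proof you would need to replace the uniform bound by such a counting of frequencies according to how long their orbits linger near $0$ (equivalently, how structured their base-$a$ expansions are).
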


While all the ideas required for the proof are already in \cites{CDG, Hil-thesis, Hil0}, we include the proof for the reader's convenience.

\begin{proof}
Since by assumption $\mu$ is not supported on a coset of a proper subgroup, $\muhat(\xi)=1$ only if $\xi = 0$ (for $\xi \in \R/\Z$). Moreover, for a $\mu$-distributed random variable $X$,
\begin{align*}
  \muhat'(0) &= \partial_{\xi=0} \E e(\xi X) = i 2 \pi \E X, \\
  \muhat''(0) &= \partial^2_{\xi=0} \E e(\xi X) = -(2 \pi)^2 \E X^2 < 0,
\end{align*}
so there is some constant $c>0$ such that
\[
  |\muhat(\xi)| \leq e^{-c |\xi|^2},
\]
where $|\xi|$ denote the distance from $\xi$ to $0$ in $\R/\Z$.

In general, for $n \geq 1$ we have
\begin{equation} \label{muhatn-formula}
  \muhat[n](\xi) = \prod_{i=0}^{n-1} \muhat(a^i \xi).
\end{equation}
Let
\[
  \xi = 0.\xi_1\xi_2\xi_3\dots \qquad (0 \leq \xi_i \leq a-1)
\]
be the base-$a$ expansion of $\xi$. Then
\[
  |\muhat[n](\xi)| \leq e^{-c' A_n(\xi)},
\]
where $A_n(\xi)$ is the number of indices $i \in \{1, \dots, n-1\}$ such that we do \emph{not} have
\[
  \xi_i = \xi_{i+1} \in \{0, a-1\},
\]
as in all other cases we have $|a^{i-1} \xi| \geq 1/a^2$.

Let $n_0 = \ceil{\log_a q}$ and $n = kn_0$ for some $k\in\Z_{>0}$. By H\"older's inequality,
\begin{align*}
  q\|\mu_n \bmod q - u_q \|_2^2
  &= \sum_{r\neq 0} |\muhat[n](r/q)|^2 \\
  &= \sum_{r\neq 0} |\muhat[n_0](r/q)|^2 \cdots |\muhat[n_0](a^{(k-1)n_0} r/q)|^2 \\
  &\leq \sum_{r\neq 0} |\muhat[n_0](r/q)|^{2k}.
\end{align*}
Note $|r/q - r'/q| \geq 1/q \geq a^{-n_0}$ for $r \neq r'$, and in particular $|r/q| \geq a^{-n_0}$ for $r \neq 0$. Thus every $r/q$ is identified by its first $n_0$ digits $\xi_1, \dots, \xi_{n_0}$, and unless $r=0$ we cannot have $\xi_1 = \cdots = \xi_{n_0} \in \{0, a-1\}$. The number of $\xi = 0.\xi_1\cdots\xi_{n_0}$ with $A_{n_0}(\xi) = A$ is bounded by
\[
  \binom{n_0}{A} a^{A+1},
\]
as unless $i$ counted by $A_{n_0}(\xi)$ then $\xi_{i+1} = \xi_i$. Hence
\begin{align*}
  q\|\mu_n \bmod q - u_q \|_2^2
  &\leq \sum_{\xi = 0.\xi_1 \cdots \xi_{n_0} \neq 0} e^{-c' A_{n_0}(\xi) k} \\
  &\leq \sum_{A=1}^{n_0} \binom{n_0}{A} a^{A+1} e^{-c'A k} \\
  &= a((1 + a e^{-c' k})^{n_0} - 1) \\
  &\leq a(\exp(a e^{-c'k} n_0) - 1).
\end{align*}
Provided that $k > C \log \log q$ for a sufficiently large constant $C$, this is bounded by
\[
  O(a^2 e^{-c' k} n_0) \le e^{-c'' n / \log q},
\]
as claimed.
\end{proof}

\section{The large sieve argument}\label{sc:large-sieve}
The purpose of this section is to prove Theorem~\ref{mainthm}(ii).

\def\M{\mathcal{M}}
\newcommand{\piop}[1]{\pi_{#1}}
\newcommand{\Pop}[2][q_0]{P_{#2}^{(#1)}}

For any group $G$, let $\M(G)$ be the space of complex measures on $G$. For $q_1 \mid q$, we consider $\M(\Z/q_1\Z)$ to be the subspace of $\M(\Z/q\Z)$ or $\M(\Z)$ consisting of measures which are uniform on fibres mod $q_1$. For integers $q_0 \mid q$, we define two projection operators
\[
  \piop{q}, \Pop{q} : \M(\Z) \to \M(\Z/q\Z).
\]
The easiest way to give their definition is in terms of Fourier multipliers: for $(r,s) = 1$, $s \mid q$,
\begin{align*}
  \widehat{\piop{q} \nu}(r/s)
  &= \widehat{\nu}(r/s) 1_{s \mid q} \\
  \widehat{\Pop{q} \nu}(r/s)
  &= \widehat{\nu}(r/s) 1_{[s, q_0] = q}.
\end{align*}
Notice that
\begin{align*}
  \piop{q} \nu &= \nu \bmod q \\
  &= \sum_{s: q_0 \mid s \mid q} \Pop{s} \nu,\\
  \Pop{q} \nu &= \sum_{s : q_0 \mid s \mid q} \mu(s / q_0) \piop{s} \nu,
\end{align*}
where $\mu$ is the M\"obius function.
From these identities, we deduce that $\|\piop{q}\| = 1$ and $\|\Pop{q}\| \leq d(q/q_0)$,
where $\|\cdot\|$ stands for the operator norm with respect to the $l^1$ norm, and $d(\cdot)$
stands for the divisor function.

\begin{lemma} \label{lem:large-sieve}
Assume $2H > (1+\eps) \log a$. Let $Q$ be large, and let $n$ be large enough that
\[
  H(\mu_n) > (1 + \eps) \log Q.
\]
Then for all but at most $Q^{1-\eps/3}$ many $q \in [Q/2, Q]$, and for all $q_0 < q^{\eps / 3}, q_0 \mid q$, we have
\[
  \|\Pop{q} \mu_n\| \ll \frac{\|\Pop{q}\|}{\log q}.
\]
\end{lemma}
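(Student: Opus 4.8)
The plan is to deploy the large sieve inequality in its standard analytic number theory form. Recall the large sieve states that for any sequence of complex numbers $(c_m)$ supported on an interval of length $N$, and any $Q$,
\[
  \sum_{q \leq Q} \sum_{\substack{r \bmod q \\ (r,q)=1}} \Bigl| \sum_m c_m e(mr/q) \Bigr|^2 \ll (N + Q^2) \sum_m |c_m|^2.
\]
First I would take $\nu = \mu_n$ (or more precisely the truncated measure $\nu_n$ from Corollary~\ref{cor:nu_n-exists}, which is supported on $\supp \mu_n \subset [-C a^n, Ca^n]$, so $N \asymp a^n$, and which satisfies $\|\nu_n\|_2^2 = \exp(-H(\mu_n) + o(n))$). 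The quantity $\widehat{\Pop{q}\nu}(r/s)$ is a Fourier coefficient of $\nu$ at a rational with denominator $s$ where $[s,q_0]=q$; the key point is that $\|\Pop{q}\nu\|_2^2 = \sum_{(r,s)=1,\,[s,q_0]=q} |\widehat{\nu}(r/s)|^2/\text{(something)}$, which after Plancherel on $\Z/q\Z$ is governed by exactly the sums of $|\widehat{\nu}(r/q)|^2$ over primitive residues that the large sieve controls.

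The main steps, in order: (1) Relate $\|\Pop{q}\mu_n\|$ (the $l^1$ operator quantity we want to bound) to $\|\Pop{q}\mu_n\|_2$ via Cauchy--Schwarz, picking up a factor of $\sqrt{q}$ or $\sqrt{\|\Pop q\|}\cdot\sqrt{q/q_0}$ — one must be careful here because $\Pop{q}\mu_n$ lives on $\Z/q\Z$ but is uniform on fibres of some sub-modulus structure; the honest bound is $\|\Pop{q}\nu\|_1 \leq (q/q_0)^{1/2}\|\Pop q\|^{1/2}\|\Pop{q}\nu\|_2$ or similar, and I would track the divisor-function factors carefully against the target $\|\Pop{q}\|/\log q$. (2) Express $\sum_{q\in[Q/2,Q]}\sum_{q_0\mid q, q_0<q^{\eps/3}} q \|\Pop{q}\nu_n\|_2^2$ as a sum over primitive Fourier coefficients $|\widehat{\nu_n}(r/s)|^2$ with appropriate multiplicities, and bound this by the large sieve: it is $\ll (a^n + Q^2)\|\nu_n\|_2^2 = (a^n + Q^2)\exp(-H(\mu_n)+o(n))$. (3) Use the hypotheses $H(\mu_n) > (1+\eps)\log Q$ and $2H > (1+\eps)\log a$ (equivalently, for large $n$, $H(\mu_n) > \tfrac{1}{2}(1+\eps/2)\log(a^n)$ roughly) to conclude that both $a^n \exp(-H(\mu_n))$ and $Q^2\exp(-H(\mu_n))$ are bounded by $Q^{2-\eps'}$ for some $\eps'>0$ — this is precisely where $H > \tfrac12\log a$ enters, making the "$N$" term in the large sieve small. (4) Conclude that the average over $q\in[Q/2,Q]$ of $q\|\Pop q\nu_n\|_2^2$ (with divisor weights) is $\ll Q^{-\eps'}$, then apply Markov's inequality: the number of "bad" $q$ for which $\|\Pop q\mu_n\| \gg \|\Pop q\|/\log q$ fails is at most $Q^{1-\eps/3}$, after absorbing $\log Q$ and divisor-function losses into the power saving. (5) Finally replace $\nu_n$ by $\mu_n$, which costs only $\|\mu_n - \nu_n\| = o(1)$ in the $l^1$ bound — harmless since we want an upper bound of order $1/\log q$ and can afford any $o(1)$, though one should double-check this doesn't interact badly with the $q_0 \mid q$ uniformity.

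The hard part will be bookkeeping the relationship between the Fourier-analytic $\Pop{q}$ operator and the raw large sieve sum: the large sieve naturally sums $|\widehat{\nu}(r/q)|^2$ over \emph{all} reduced fractions with denominator exactly $q$, whereas $\Pop{q}\nu$ involves denominators $s$ with $[s,q_0]=q$ — so for fixed $q$ one is really summing over several values of $s$ (divisors of $q$ that are multiples of $q/q_0$), and when one then sums over $q\in[Q/2,Q]$ and over $q_0$, each primitive fraction $r/s$ gets counted with a multiplicity that I need to bound by something like $d(q_0)^2$ or $Q^{\eps/3+o(1)}$, which is absorbed by the power saving from step (3). A secondary subtlety is the conversion in step (1): going from the $l^2$ bound on $\Pop q\nu$ (a measure on $\Z/q\Z$) to the $l^1$ bound needs the Cauchy--Schwarz loss to be only $\sqrt{q}$ up to the $\|\Pop q\|$ factor, and this requires knowing $\Pop q\nu$ is supported on (or controlled by) a structured subset — I would handle this by noting $\Pop q \nu \in \M(\Z/q\Z)$ has at most $q$ atoms so $\|\Pop q\nu\|_1 \leq \sqrt{q}\,\|\Pop q\nu\|_2$ directly, which suffices since the extra $\sqrt{q}$ combines with $q\|\Pop q\nu\|_2^2$ to give exactly $q^{3/2}\|\Pop q\nu\|_2^2$-type terms that the power saving still beats, modulo checking the target's $\|\Pop q\|/\log q$ normalization lines up (here one uses $\|\Pop q\| \asymp d(q/q_0)$ on average is not needed — only the individual bound, with the $1/\log q$ coming from the $Q^{-\eps'}$ surplus traded for a $\log$).
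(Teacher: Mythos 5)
Your overall strategy is the same as the paper's (large sieve plus the SMB corollary to get $\|\nu_n\|_2^2 \approx e^{-H(\mu_n)}$, followed by Markov), and much of your bookkeeping is sound. But there is a genuine gap in your step (3). You claim $a^n \exp(-H(\mu_n)) \ll Q^{2-\eps'}$, and locate "$H > \tfrac12\log a$" as what makes the $N$-term small. That is not enough. With $H(\mu_n) \sim Hn$ one has $a^n \exp(-H(\mu_n)) \approx \exp\bigl(n(\log a - H)\bigr)$, and under the hypothesis $H > \tfrac12(1+\eps)\log a$ alone the exponent $\log a - H$ can still be \emph{positive} (it is, unless $H > \log a$). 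The stated hypotheses only give a \emph{lower} bound $n \gg \log Q / H$, not an upper bound, so for $n$ much larger than $(1+\eps)\log Q / H$ the term $a^n \|\nu_n\|_2^2$ blows up and the large sieve inequality yields nothing. The paper's fix, which you omit, is to first observe that $\|\Pop{q}\mu_n\|$ is non-increasing in $n$ (because $\Pop{q}$ commutes with convolution by $(m_{a^n})_*\mu$ and convolution contracts the $l^1$ norm), so one may \emph{reduce} $n$ until $H(\mu_n) \sim (1+\eps)\log Q$; only then does $2H > (1+\eps)\log a$ force $a^n < Q^2$ and make both large-sieve terms comparable. Without this reduction your proof does not go through.

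Two smaller remarks. In step (5) you say the replacement $\nu_n \to \mu_n$ "costs only $o(1)$ in the $l^1$ bound — harmless since \dots we can afford any $o(1)$." But the target is $\|\Pop{q}\mu_n\| \ll \|\Pop{q}\|/\log q$, and an $o(1)$ error does not satisfy a $1/\log q$ bound. You need the sharper estimate $\|\mu_n - \nu_n\| \ll 1/(\a^2 n) \asymp 1/\log Q$ that Corollary~\ref{cor:nu_n-exists} gives for constant $\a$, and you also need $\a$ small enough that the $O(\a n)$ fuzz in $\log\|\nu_n\|_2^{-2}$ doesn't eat the $\eps$-margin in step (3) — both points the paper addresses explicitly. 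Finally, your plan to sum over $(q, q_0)$ pairs simultaneously and absorb a multiplicity $\leq Q^{\eps/3}$ is workable, but it is cleaner to fix $q_0$ first (as the paper does): then the Fourier supports of the $\Pop{q}\nu_n$ with $q_0 \mid q$ are literally disjoint, multiplicity one, and the count of exceptional $q$ per $q_0$ is $Q^{1-2\eps/3}$, giving $Q^{1-\eps/3}$ after the union over $q_0 < Q^{\eps/3}$.
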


Note that if $q$ is prime then $\Pop[1]{q} = \pi_q - \pi_1$, and
\[
  \Pop[1]{q} \mu_n = \mu_n \bmod q - u_q.
\]
Thus the prime case of Theorem~\ref{mainthm}(ii) follows immediately. The composite case will require more work, which we explain at the end of this section.

The proof of the lemma relies on the large sieve inequality, which we recall now.
Letting $q$ be an integer, we write $R(q)$ for the residues modulo $q$ that are coprime to $q$.

\begin{theorem}[\cite{friedlander--iwaniec--opera}*{Theorem 9.3}]\label{th:large-sieve}
Let $f:\Z\to\C$ be a function supported on $[-N,N]$ for some $N\in\Z_{>0}$.
Then
\[
\sum_{q\le Q}\sum_{r\in R(q)} |\wh f(r/q)|^2\le (Q^2+2N)\sum_n |f(n)|^2.
\]
\end{theorem}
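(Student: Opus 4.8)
The plan is to deduce this from the \emph{analytic} large sieve for well-spaced points: if $\alpha_1,\dots,\alpha_R\in\R/\Z$ satisfy $|\alpha_i-\alpha_j|\ge\delta$ for all $i\ne j$ (distance in $\R/\Z$), and $g:\Z\to\C$ is supported on a block of $L$ consecutive integers, then
\[
  \sum_{r=1}^{R}|\wh g(\alpha_r)|^2\le(\delta^{-1}+L)\,\|g\|_2^2.
\]
Granting this, Theorem~\ref{th:large-sieve} follows quickly. The Farey fractions $\{r/q:q\le Q,\ r\in R(q)\}$ are $\delta$-spaced with $\delta^{-1}=Q(Q-1)$: if $r/q\ne r'/q'$ with $q,q'\le Q$ then (reducing mod $1$) their distance is $|rq'-r'q|/(qq')$ if this is $\le 1/2$ and $1-|rq'-r'q|/(qq')$ otherwise, and in both cases it is $\ge 1/(qq')$ since $1\le|rq'-r'q|\le qq'-1$; moreover two distinct denominators cannot force $qq'>Q(Q-1)$ (if both equalled $Q$ the numerators would already force the gap to be $\ge 1/Q$). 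A function $f$ supported on $[-N,N]$ is, after translation, supported on $L=2N+1$ consecutive integers, and translation multiplies $\wh f$ by a unimodular factor, leaving $|\wh f|$ and $\|f\|_2$ unchanged. Applying the analytic large sieve gives $\sum_{q\le Q}\sum_{r\in R(q)}|\wh f(r/q)|^2\le\bigl(Q(Q-1)+2N+1\bigr)\|f\|_2^2\le(Q^2+2N)\|f\|_2^2$, the last step valid since $Q\ge1$.

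For the analytic large sieve I would argue by duality. Fixing $\{\alpha_r\}$ and the support block $I$, the displayed inequality says that the linear map $g\mapsto(\wh g(\alpha_r))_{r}$ from $\ell^2(I)$ to $\C^R$ has squared operator norm at most $\delta^{-1}+L$; equivalently its adjoint $(c_r)_r\mapsto\bigl(n\mapsto\sum_r c_r e(\alpha_r n)\bigr)\big|_{I}$ does, i.e.
\[
  \sum_{n\in I}\Bigl|\sum_{r=1}^{R}c_r e(\alpha_r n)\Bigr|^2\le(\delta^{-1}+L)\sum_{r=1}^{R}|c_r|^2
  \qquad\text{for all }(c_r)\in\C^R.
\]
Expanding the square and peeling off the diagonal term,
\[
  \sum_{n\in I}\Bigl|\sum_r c_r e(\alpha_r n)\Bigr|^2
  = L\sum_r|c_r|^2+\sum_{r\ne s}c_r\overline{c_s}\,S(\alpha_r-\alpha_s),\qquad
  S(\gamma)=\sum_{n\in I}e(\gamma n),
\]
so it remains to bound the off-diagonal double sum by $\delta^{-1}\sum_r|c_r|^2$ in modulus.

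This off-diagonal estimate is where essentially all the work lies. Since $|S(\gamma)|\le\min\bigl(L,\tfrac12|\gamma|^{-1}\bigr)$, the triangle inequality together with the spacing hypothesis yields only $O(\delta^{-1}\log R)\sum_r|c_r|^2$, which is off by a logarithm; to recover the clean linear constant I would take $I$ symmetric, so that $S(\gamma)$ is the Dirichlet kernel $\sin(\pi L\gamma)/\sin(\pi\gamma)$, expand $1/\sin(\pi\gamma)$ around its pole, and apply Montgomery and Vaughan's Hilbert-type inequality $\bigl|\sum_{r\ne s}c_r\overline{c_s}\csc\bigl(\pi(\alpha_r-\alpha_s)\bigr)\bigr|\le\delta^{-1}\sum_r|c_r|^2$ for $\delta$-spaced $\alpha_r$. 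An alternative route, avoiding duality, is Selberg's method: one estimates $\sum_r|\wh f(\alpha_r)|^2$ directly after majorizing $1_I$ by a function whose Fourier transform is supported in $(-\delta,\delta)$ (the Beurling--Selberg extremal majorant), so that all cross terms vanish identically and only the diagonal contribution $(\delta^{-1}+L)\|f\|_2^2$ remains. I expect the main obstacle to be precisely this sharp off-diagonal bound: the reduction to well-spaced points and the passage to the dual bilinear form are formal, but producing the clean constant requires a genuine input — either the explicit Beurling--Selberg majorant or the Montgomery--Vaughan Hilbert inequality — after which assembling the pieces as above gives the theorem with the stated constant $Q^2+2N$.
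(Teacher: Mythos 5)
The paper does not prove this statement at all: it is quoted verbatim from Friedlander--Iwaniec (\emph{Opera de Cribro}, Theorem 9.3) and used as a black box, so there is no internal proof to compare against. Your proposal is the standard route to this classical inequality, and the reductions you carry out are correct: the Farey fractions $r/q$, $q\le Q$, are indeed $\delta$-spaced with $\delta^{-1}=Q(Q-1)$ (for $q\ne q'$ one has $qq'\le Q(Q-1)$, and for $q=q'$ the gap is at least $1/Q$, which suffices when $Q\ge 2$; $Q=1$ is trivial), translation-invariance of $|\wh f|$ lets you treat $[-N,N]$ as a block of $L=2N+1$ integers, and $Q(Q-1)+2N+1\le Q^2+2N$ closes the bookkeeping. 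The duality step is also correct as stated. The one genuinely hard ingredient --- the sharp off-diagonal bound --- you identify accurately but do not prove, deferring to the Montgomery--Vaughan Hilbert-type inequality or to the Beurling--Selberg majorant; that is an honest and acceptable way to close the argument, since both are established results of the same vintage as the theorem itself, though it means your write-up, like the paper, ultimately rests on a citation. One small technical caveat: when you invoke Montgomery--Vaughan for the Dirichlet kernel $S(\gamma)=\sin(\pi L\gamma)/\sin(\pi\gamma)$, the correct manoeuvre is not to ``expand $1/\sin(\pi\gamma)$ around its pole'' but to write $\sin(\pi L\gamma)$ as a combination of $e(\pm L\gamma/2)$ and absorb the factors $e(\pm L\alpha_r/2)$ into the coefficients $c_r$, applying the $\csc$-inequality twice; as phrased, your step does not quite parse, though the intended argument is standard and does give the constant $\delta^{-1}+L$ you use.
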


We record the following corollary.
\begin{corollary}\label{cr:large-sieve}
Let $q_0,Q,N\in\Z_{>0}$, and let $\nu$ be a probability measure supported on $\Z\cap[-N,N]$.
Then
\[
\sum_{\substack{q \in [Q/2, Q]\\ q_0|q}} q\|P_q^{(q_0)}\nu\|_2^2\ll(Q^2+N)\|\nu\|_2^2.
\]
\end{corollary}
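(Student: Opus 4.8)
The plan is to deduce Corollary~\ref{cr:large-sieve} from Theorem~\ref{th:large-sieve} by expressing $\|P_q^{(q_0)}\nu\|_2^2$ in terms of the Fourier coefficients $\wh\nu(r/s)$ with $[s,q_0]=q$ and then summing over $q$ in the dyadic range. First I would recall that, since $P_q^{(q_0)}\nu$ is supported on $\Z/q\Z$ and its Fourier transform is $\wh\nu(r/s)1_{[s,q_0]=q}$, Parseval on $\Z/q\Z$ gives
\[
  q\|P_q^{(q_0)}\nu\|_2^2 = \sum_{r=0}^{q-1} |\wh{P_q^{(q_0)}\nu}(r/q)|^2 = \sum_{\substack{s\mid q\\ [s,q_0]=q}} \sum_{r\in R(s)} |\wh\nu(r/s)|^2.
\]
Summing this over $q\in[Q/2,Q]$ with $q_0\mid q$, every modulus $s$ that appears satisfies $s\mid q\le Q$, so in particular $s\le Q$, and each such $s$ is counted at most once (the value of $q$ is determined by $s$, namely $q=[s,q_0]$). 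Hence
\[
  \sum_{\substack{q\in[Q/2,Q]\\ q_0\mid q}} q\|P_q^{(q_0)}\nu\|_2^2 \le \sum_{s\le Q} \sum_{r\in R(s)} |\wh\nu(r/s)|^2.
\]

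Next I would apply Theorem~\ref{th:large-sieve} with $f(n)=\nu(\{n\})$, which is supported on $[-N,N]$: this bounds the right-hand side by $(Q^2+2N)\sum_n \nu(\{n\})^2 = (Q^2+2N)\|\nu\|_2^2$. Since $Q^2+2N \ll Q^2+N$, this is exactly the claimed bound. The argument is essentially bookkeeping once the Parseval identity for $P_q^{(q_0)}$ is written down correctly.

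The only point requiring a little care — and the place I would expect a referee to look twice — is the claim that the modulus $s$ ranges without repetition as $q$ ranges over the dyadic block: one must check that the map $s\mapsto [s,q_0]$ is injective on the set of $s$ with $q_0\mid [s,q_0]=q$ for a fixed $q$, which is clear since then $s\mid q$ and $s$ is the unique divisor of $q$ with $[s,q_0]=q$ only up to divisors of $q_0$; in fact $s$ need not be unique for fixed $q$, so the correct statement is that the pairs $(s,q)$ with $q_0\mid q\in[Q/2,Q]$ and $[s,q_0]=q$ inject into $\{s : s\le Q\}$ with multiplicity at most $d(q_0)$. To avoid this subtlety entirely I would instead argue directly: for each $s\le Q$ there is at most one $q=[s,q_0]$, so summing over $(q,s)$ pairs with $[s,q_0]=q$ is the same as summing over $s$ with $[s,q_0]\in[Q/2,Q]$, a subset of $\{s\le Q\}$, each counted once. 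With that phrasing the inequality above holds verbatim and the corollary follows.
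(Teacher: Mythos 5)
Your proof is correct and follows essentially the same route as the paper: expand $q\|P_q^{(q_0)}\nu\|_2^2$ via Parseval into the Fourier coefficients $\wh\nu(r/s)$ with $[s,q_0]=q$, observe that each reduced fraction $r/s$ occurs for at most one $q$ (namely $q=[s,q_0]$), and apply Theorem~\ref{th:large-sieve}. The detour in your last paragraph about injectivity of $s\mapsto[s,q_0]$ and a multiplicity $d(q_0)$ is a red herring (no injectivity in $q$ is needed), but your final direct phrasing—each $s$ determines at most one $q$, so every fraction is counted once—is exactly the paper's argument and settles the point.
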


\begin{proof}
By definition, we have
\[
q\|P_q^{(q_0)}\nu\|_2^2=\sum |\wh\nu(r/s)|^2,
\]
where the summation on the right extends over all fractions $r/s$
such that $s \mid q$, $[s,q_0]=q$ and $r\in R(s)$.
We sum this up for $q \in [Q/2, Q]$ and $q_0 \mid q$, and note that each fraction $r/s$ will appear
at most once on the right hand side.
Therefore, we can invoke Theorem~\ref{th:large-sieve}, which yields the desired inequality.
\end{proof}

\begin{proof}[Proof of Lemma \ref{lem:large-sieve}]
Note that $\|\Pop{q}\mu_n\|$ is monotonic in $n$, as $\Pop{q}$ commutes with convolution:
\begin{align*}
  \|\Pop{q}\mu_{n+1}\|
  &= \|\Pop{q} (\mu_n * (m_{a^n})_* \mu)\|\\
  &= \|(\Pop{q}\mu_n) * (m_{a^n})_* \mu\|\\
  &\leq \|\Pop{q}\mu_n\|.
\end{align*}
Hence, by reducing $n$ if necessary, and by assuming $Q$ is large enough, we may assume that
\[
  H(\mu_n) \sim H n \sim (1 + \eps) \log Q.
\]
In particular, since $2H > (1+\eps) \log a$, we have $Q^2 > a^n$.

By Corollary~\ref{cor:nu_n-exists}, there is a measure $\nu_n$ such that $\|\mu_n - \nu_n\| \ll \a^{-2}/n$ and
\[
  \log \|\nu_n\|_2^{-2} = H(\mu_n) + O(\a n) \sim (1 + \eps) \log Q.
\]
Fix any $q_0 < Q^{\eps / 3}$. The measures $\Pop{q} \nu_n$ for various $q < Q, q_0 \mid q$, have disjoint Fourier supports. Therefore, by the large sieve, Corollary \ref{cr:large-sieve},
\[
  \sum_{\substack{q \in [Q/2, Q] \\ q_0 \mid q}} q \|\Pop{q} \nu_n\|_2^2 \ll (a^n + Q^2) \|\nu_n\|_2^2 \ll Q^{1-\eps}.
\]
Hence with at most $Q^{1-2\eps/3}$ exceptions we have
\[
  q \|\Pop{q} \nu_n\|_2^2 \ll Q^{-\eps/3}.
\]
Since there are at most $Q^{1-2\eps/3}$ exceptions for each $q_0 < Q^{\eps / 3}$, there are at most $Q^{1-\eps/3}$ exceptions in all.

For unexceptional $q$ we have, for any $q_0 < Q^{\eps / 3}, q_0 \mid q$,
\begin{align*}
  \|\Pop{q} \mu_n\|
  &\leq \|\Pop{q}(\mu_n - \nu_n)\| + \|\Pop{q} \nu_n\|\\
  &\leq \|\Pop{q}(\mu_n - \nu_n)\| + q^{1/2} \|\Pop{q} \nu_n\|_2\\
  &\ll \|\Pop{q}\| \alpha^{-2} n^{-1} + Q^{-\eps/6}.
\end{align*}
If $\alpha$ is a sufficiently small constant we deduce
\[
  \|\Pop{q} \mu_n\| \ll \frac{\|\Pop{q}\|}{n} \asymp \frac{\|\Pop{q}\|}{\log Q},
\]
as claimed.
\end{proof}

For the proof of the composite case of Theorem~\ref{mainthm}(ii), we need one more ingredient, a result about the prime factorization of a typical integer.

\begin{lemma}\label{lm:divisors}
All but $o(Q)$ integers $q\in[1,Q]$ can be written as
\[
q=p_1p_2\cdots p_k q_0,
\]
where $p_1>\cdots>p_k$ are primes and
\begin{align*}
  k &\asymp \log \log \log Q,\\
  \log q_0 &\in \left[\frac{\log Q}{(\log \log Q)^3}, \frac{\log Q}{(\log \log Q)^2}\right].
\end{align*}
\end{lemma}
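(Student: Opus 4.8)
The plan is to extract the required factorization from the standard statistics of prime factorizations of a typical integer, as codified by the Erdős–Kac theorem and the classical results on the distribution of prime factors. First I would recall that for all but $o(Q)$ integers $q \in [1, Q]$, the number of prime factors $\Omega(q)$ (with multiplicity) is $(1+o(1)) \log\log Q$; in fact the largest prime factor $P_1(q)$ satisfies $\log P_1(q) \asymp \log Q$ typically, the second largest satisfies $\log\log P_2(q) \sim \log\log Q$, and more generally the $i$-th largest prime factor $p_i$ obeys $\log\log p_i \sim (1 - (i-1)/\log\log Q)\log\log Q$ for $i$ small. Concretely, a convenient way to package this: for a typical $q$, write $q = P_1 P_2 \cdots P_m$ with $P_1 \ge P_2 \ge \cdots$, and the partial products $\log(P_1 \cdots P_i)$ interpolate smoothly; the "tail" consisting of primes below some threshold $y$ contributes $\log\log y - \log\log 2 + o(\log\log Q)$ to $\log\log(\text{that tail product})$ — no, more usefully, the number of prime factors of $q$ that exceed $y$ is $\sim \log\log Q - \log\log y$ for typical $q$, provided $\log\log y = o(\log\log Q)$ but $y \to \infty$ suitably.

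The concrete construction then goes as follows. Set the threshold so that we peel off the large prime factors $p_1 > p_2 > \cdots > p_k$ one at a time (discarding repeated prime factors, which only affect $o(Q)$ integers since most $q$ are squarefree-ish in the relevant range — or simply allowing $q_0$ to absorb any prime powers), stopping once the remaining cofactor $q_0$ first drops into the target window $[\,\log Q/(\log\log Q)^3,\ \log Q/(\log\log Q)^2\,]$ for $\log q_0$. I would argue that for a typical $q$ this window is "hit": successive large prime factors $p_i$ have $\log p_i$ of multiplicative size roughly $(\log Q)^{1/e^{\,i/?}}$ — more precisely $\log p_i = (\log Q)^{1 - \Theta(i/\log\log Q)}$ for typical $q$ — so each peeling step multiplies the remaining $\log q_0$ by a factor that is $1 - o(1)$ close to a constant less than $1$ but, crucially, is not too small, so we cannot "jump over" the window. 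The number of steps needed to bring $\log q_0$ down from $\asymp \log Q$ to $\asymp \log Q /(\log\log Q)^{2\text{ or }3}$ is then $\asymp \log\log\log Q$, giving the claimed $k \asymp \log\log\log Q$.

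The main technical obstacle is making precise the claim that the peeling process lands inside the prescribed window rather than skipping over it, uniformly for all but $o(Q)$ of the $q$. The cleanest route is probably not to track individual prime factors but to use a smooth-number / sieve statement: for any fixed $A$, the count of $q \le Q$ whose largest prime factor exceeding $y := \exp((\log Q)/(\log\log Q)^{3})$ but all of whose prime factors below $y$ multiply to something $\le \exp((\log Q)/(\log\log Q)^{2})$ — i.e. the "$y$-rough part" of $q$ has the right size — is $(1+o(1))Q$; this follows from the fact that the distribution of $\log(\text{$y$-smooth part of }q)/\log Q$ converges (Dickman-type / Billingsley's theorem on $\log p_i/\log Q$) and that for our choice of $y$ with $\log\log y / \log\log Q \to 1$... wait, here $\log\log y = \log\log Q - 3\log\log\log Q + O(1)$, so $\log\log y \sim \log\log Q$, meaning only $O(\log\log\log Q)$ prime factors exceed $y$ typically — exactly the regime we want. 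Having established that the $y$-smooth part of a typical $q$ has logarithm in $[(\log Q)/(\log\log Q)^3,\ (\log Q)/(\log\log Q)^2]$ after at most adjusting $y$ slightly, I would take $q_0$ to be that smooth part and $p_1 > \cdots > p_k$ the distinct primes exceeding $y$, folding any prime powers into $q_0$; then verifying $k \asymp \log\log\log Q$ is immediate from $\Omega(q) \sim \log\log Q$ and the count of prime factors below $y$ being $\sim \log\log y \sim \log\log Q$, whose difference is $\Theta(\log\log\log Q)$ for typical $q$ by a second-moment (Turán–Kubilius) estimate applied to $\omega_{>y}(q) = \sum_{p \mid q,\, p > y} 1$, whose mean is $\sum_{y < p \le Q} 1/p = \log\log Q - \log\log y + O(1) \asymp \log\log\log Q$ and whose variance is of the same order, so concentration gives $\omega_{>y}(q) \asymp \log\log\log Q$ for all but $o(Q)$ integers $q$.
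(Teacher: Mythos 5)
Your overall strategy — cut at a smooth threshold $y$, let $q_0$ be the $y$-smooth part, use a Hall–Tenenbaum / Dickman-type bound to control $q_0$ from above, and use Turán–Kubilius-type concentration of $\omega_{>y}$ to pin down $k$ — is exactly what the paper does, and you correctly identify that the mean of $\omega_{>y}$ is $\log\log Q - \log\log y \asymp \log\log\log Q$. But there is a concrete gap in the lower bound on $\log q_0$.

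You set $y = \exp\bigl((\log Q)/(\log\log Q)^3\bigr)$, take $q_0$ equal to the $y$-smooth part of $q$, and then assert (``after at most adjusting $y$ slightly'') that $\log q_0 \ge (\log Q)/(\log\log Q)^3 = \log y$ for $1-o(1)$ of the $q$. That is false: the random variable $\log S_y(q)$ (log of the $y$-smooth part) has mean $\sim \log y$ but also standard deviation $\asymp \log y$, so there is no concentration at that scale, and in fact $\P\bigl(S_y(q) < y\bigr)$ tends to a positive constant (around $e^{-\gamma}$). So sitting at the lower endpoint of the window genuinely fails, and this is not fixable by a ``slight'' adjustment of $y$ within the same window. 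The paper's fix is to separate the two roles by putting the cut strictly \emph{between} the endpoints, at $\log y = (\log Q)/(\log\log Q)^{2.5}$. Then both mechanisms have a divergent quantity to exploit: (a) $\log z/\log y = (\log\log Q)^{1/2} \to \infty$, so Hall–Tenenbaum still gives the upper bound on $\log q_0$; and (b) $\sum 1/p$ over primes with $(\log Q)/(\log\log Q)^3 < \log p \le (\log Q)/(\log\log Q)^{2.5}$ is $\sim \tfrac12 \log\log\log Q \to \infty$, so by the same Poisson-tail/concentration estimate you invoke for the large primes, $q_0$ is almost surely divisible by such a prime, which alone forces $\log q_0 > (\log Q)/(\log\log Q)^3$. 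You need that three-scale separation ($2 < 2.5 < 3$), not just the two endpoints.

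A minor further point: you offer ``allowing $q_0$ to absorb any prime powers'' as an alternative to assuming squarefreeness in the large-prime range. That alternative does not work — if $p^2 \mid q$ with $p$ near $Q^{1/2}$, absorbing $p^2$ pushes $\log q_0$ up to $\asymp \log Q$, far outside the window. Your first alternative (discard the $o(Q)$ integers divisible by $p^2$ for some prime $p > \exp((\log Q)/(\log\log Q)^3)$) is the correct one, and is what the paper does.
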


(The exponents $2$ and $3$ are not special: we could replace the interval for $\log q_0$ with any logarithmically long interval, adjusting $k$ as necessary.)

This follows easily from well-known results on the ``anatomy of integers''.
For the sake of completeness we give the proof.

\begin{proof}
We first apply \cite{HT-divisors}*{Theorem~07}, which states the following.
Let $x\ge z\ge y\ge 2$ be integers. Writing $\Theta(x,y,z)$ for the number of positive integers $n\le x$
such that the product of those prime factors of $n$ (allowing for multiplicities) that are less than $y$
exceeds $z$, we have
\[
\Theta(x,y,z)\ll x\exp\Big( -c_0\frac{\log z}{\log y}\Big),
\]
where $c_0$ and the implied constant are absolute.
We take
\begin{align*}
  x &= Q,\\
  \log y &= \log Q / (\log\log Q)^{2.5}, \\
  \log z &= \log Q / (\log\log Q)^2.
\end{align*}
The conclusion is that for all but $o(Q)$ integers $q\in[1,Q]$ we have
\[
  q=p_1p_2\cdots p_k q_0,
\]
where $p_1\ge\cdots\ge p_k$ are all the prime factors $p$ of $q$ such that
\[
  \log p > \log Q/(\log\log Q)^{2.5},
\]
and
\[
  \log q_0 \le \frac{\log Q}{(\log \log Q)^2}.
\]

The number of integers $q\in[1,Q]$ that are divisible by $p^2$ for some prime $p>\exp(\log Q/(\log\log Q)^3)$ can be estimated by
\[
\sum_{p=\exp(\log Q/(\log\log Q)^3)}^Q Q/p^2\le o(Q).
\]
Therefore we have $p_1 > \cdots > p_k$ almost surely.

Next, for any set of primes $T$, with harmonic sum
\[
  H(T) = \sum_{p \in T} \frac1p,
\]
if $\Omega(q, T)$ is the number of prime factors of $q$ in $T$, counted with multiplicity, then for all but $o(Q)$ integers $q \leq Q$ we have
\begin{equation} \label{OmegaT-estimate}
  \Omega(q, T) = H(T) + O(H(T)^{1/2} \log H(T))
\end{equation}
(and $\log$ can be replaced by any function tending to infinity). Indeed for any $k \geq 0$ we have a Poisson-type bound of the form
\[
  |\{q \leq Q : \Omega(q, T) = k\}| \ll Q e^{-H(T)} \frac{H(T)^k}{k!} \left(1 + \frac{k}{H(T)}\right)
\]
(see for example \cite{HT-divisors}*{Section~0.5}, \cite{ford-notes}*{Theorem 2.13}, or \cite{tudesq}*{Th\'eor\`eme 1}), and \eqref{OmegaT-estimate} follows by comparison with the Poisson distribution.

By \eqref{OmegaT-estimate} with
\[
  T_1 = \{p~\text{prime} : \log Q / (\log \log Q)^{2.5} < \log p \leq \log Q\},
\]
we find that
\[
  k \sim H(T_1) = 2.5 \log \log \log Q + O(1).
\]
Similarly, by \eqref{OmegaT-estimate} with
\[
  T_2 = \{p~\text{prime} : \log Q / (\log \log Q)^{3} < \log p \leq \log Q / (\log \log Q)^{2.5}\},
\]
we find that $q_0$ is almost surely divisible by some $p \in T_2$ (in fact some $c \log \log \log Q$ many $p$), so in particular
\[
  \log q_0 \geq \log Q / (\log \log Q)^3.
\]
This completes the proof.
\end{proof}

Let $E_1$ be the set of all $q$ which are exceptional in the sense of Lemma~\ref{lem:large-sieve}. Let $E_2$ be the set of all $q \leq Q$ which have some divisor $d \in E_1$ with $d > \log Q$. Then, by Lemma~\ref{lem:large-sieve} we have
\[
  |E_1 \cap [Q_1/2, Q_1]| \leq Q_1^{1-\eps/3} \qquad (\log Q \leq Q_1 \leq 2Q),
\]
so
\begin{align*}
  |E_2|
  &\leq \sum_{d \in E_1 \cap [\log Q, Q]} \frac{Q}{d} \\
  &\ll \sum_{\substack{Q_1 \in [\log Q, 2Q] \\ \text{dyadic} }} \frac{Q}{Q_1^{\eps/3}} \\
  &\ll_\eps \frac{Q}{(\log Q)^{\eps / 3}}.
\end{align*}
Hence $E_2$ is almost empty.

A typical integer $q$ can be written
\[
  q = p_1 p_2 \cdots p_k q_0
\]
as in Lemma \ref{lm:divisors}. Note that
\[
  d(q/q_0) = 2^k \ll \log \log q.
\]
We have a decomposition
\begin{align*}
  \mu_n \bmod q
  &= \sum_{s : q_0 \mid s \mid q} \Pop{s} \mu_n \\
  &= \piop{q_0} \mu_n + \sum_{\substack{s : q_0 \mid s \mid q \\ s \neq q_0}} \Pop{s} \mu_n.
\end{align*}
Hence
\[
  \|\mu_n \bmod q - u_q\|
  \leq \|\mu_n \bmod q_0 - u_{q_0}\| + \sum_{\substack{s : q_0 \mid s \mid q \\ s \neq q_0}} \|\Pop{s} \mu_n\|.
\]

If $q \notin E_2$ then each of the divisors $s$ with $q_0 \mid s \mid q$ is unexceptional in the sense of Lemma~\ref{lem:large-sieve}. There are two cases. If $q_0 < s^{\eps / 3}$ then Lemma~\ref{lem:large-sieve} applies, so
\[
  \|\Pop{s}\mu_n\| \ll \frac{\|\Pop{s}\|}{\log s} \leq \frac{d(s/q_0)}{\log q_0}.
\]
If on the other hand $q_0 \geq s^{\eps / 3}$, $q_0 \neq s$, then
\begin{align*}
  \Pop{s}\mu_n
  &= \sum_{q_1 : q_0 \mid q_1 \mid s} \mu(q_1 / q_0) \piop{q_1} \mu_n \\
  &= \sum_{q_1 : q_0 \mid q_1 \mid s} \mu(q_1 / q_0) (\piop{q_1} \mu_n - u_{q_1}),
\end{align*}
so, by Lemma~\ref{lem:small-moduli},
\[
  \|\Pop{s}\mu_n\| \leq d(s/q_0) \max_{q_1 \leq s} \|\mu_n \bmod q_1 - u_{q_1}\| \leq d(s/q_0) e^{-cn / \log s}.
\]
Note $\log s \ll \eps^{-1} \log q_0$. Similarly,
\[
  \|\mu_n \bmod q_0 - u_{q_0}\| \leq e^{-c n / \log q_0}.
\]

Putting the above estimates together, we have
\begin{align*}
  \|\mu_n\bmod q - u_q \|
  &\leq d(q / q_0) e^{-c\eps n / \log q_0} + \frac{d(q/q_0)^2}{ \log q_0 } \\
  &\leq e^{-c \eps (\log \log q)^2} + \frac{(\log \log q)^5}{\log q}.
\end{align*}
This completes the proof of Theorem~\ref{mainthm}(ii).

\section{Alternative argument based on sophisticated pruning}\label{sc:alternative}

In this section, we present an alternative argument to prove Theorem~\ref{mainthm}(ii) that is closer 
to that in \cite{CDG}*{Section~6}.
This also relies on the large sieve inequality, but it avoids the use of the projection operators 
introduced in the previous section.

For each $n$, let $\nu_n$ be a measure as in Corollary~\ref{cor:nu_n-exists} with $\alpha = 1/\log n$. Thus
\begin{align*}
  & \| \mu_n - \nu_n\| = n^{-1 + o(1)}, \\
  & \log \|\nu_n\|_2^{-2} = H n + o(n).
\end{align*}
Let $\pi_n$ be the following measure:
\begin{align*}
  \pi_n =& \mu_{w_0} 
  *[(m_{a^{w_0}})_* \nu_{w_1-C} *(m_{a^{w_0 + w_1-C}})_*\mu_C]\\
  &*[(m_{a^{w_0 + w_1}})_* \nu_{w_2-C} * (m_{a^{w_0 + w_1+w_2-C}})_*\mu_C]*\cdots\\
  &*[(m_{a^{w_0 + \cdots + w_{k-1}}})_* \nu_{w_k-C}*(m_{a^{w_0 +\cdots+ w_k-C}})_*\mu_C],
\end{align*}
where $w_0 = \floor{n / \log n}$, $C$ is a suitably large integer depending on $a$ and $\mu$ such that
\[
\supp\nu_{w-C}\subset[-a^w/2,a^w/2],
\]
and $w_1, \dots, w_k$ are chosen as in the following lemma.

\begin{lemma}
We can choose $w_1, \dots, w_k \geq n^{0.1}$ such that
\begin{enumerate}
  \item $k \sim (\log n)^2$,
  \item $w_1 + \cdots + w_k = n - w_0$, and
  \item for every $w \in [n^{0.2}, n]$ there is some $i$ such that \[w_1 + \cdots + w_i \in [(1-o(1))w, w].\]
\end{enumerate}
\end{lemma}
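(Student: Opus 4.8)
The plan is to prescribe the partial sums $S_i := w_1 + \cdots + w_i$ directly rather than the increments, choosing them to grow geometrically with ratio $1 + \Theta(1/\log n)$, preceded by a single coarse jump that lifts the running total to $n^{0.2}$. Precisely, I would set $T := n - w_0$; since $w_0 = \floor{n/\log n} = o(n)$ we have $T \sim n$ and $\log T = (1+o(1))\log n$. Let $k := \floor{(\log n)^2} + 1$, so that $k \sim (\log n)^2$ and part~(1) holds by fiat. Put $S_0 := 0$, $S_1 := \floor{n^{0.2}}$, and $S_i := \floor{S_1 \rho^{i-1}}$ for $1 \le i \le k$, where $\rho := (T/S_1)^{1/(k-1)}$; finally let $w_i := S_i - S_{i-1} \in \Z$. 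Since $S_1 \rho^{k-1} = T \in \Z$ we get $S_k = T$, hence $\sum_i w_i = S_k - S_0 = n - w_0$, which is part~(2).

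Next I would verify the size bound $w_i \ge n^{0.1}$. The choice of $k$ forces $\log \rho = \frac{\log(T/S_1)}{k-1} = \frac{(0.8 + o(1))\log n}{(\log n)^2} \to 0$ (using $\log(T/S_1) = (0.8+o(1))\log n$), so $\rho \to 1$ and $\rho - 1 \sim \log\rho \sim 0.8/\log n$. Now $w_1 = \floor{n^{0.2}} \ge n^{0.1}$ trivially, and for $2 \le i \le k$ one has $w_i \ge S_1 \rho^{i-2}(\rho - 1) - 1 \ge S_1(\rho - 1) - 1 \sim 0.8\, n^{0.2}/\log n$, which is far larger than $n^{0.1}$ for $n$ large. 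In particular the $S_i$ are strictly increasing and $w_i \ge C$, so the measures $\nu_{w_i - C}$ appearing in $\pi_n$ are well defined.

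Finally, for part~(3) I would fix $w \in [n^{0.2}, n]$ and argue by cases. If $w \ge T$, take $i = k$: then $S_k = T \ge n(1 - 1/\log n) \ge w(1 - 1/\log n)$ and $S_k = T \le w$. If $w < T$, then $S_1 = \floor{n^{0.2}} \le n^{0.2} \le w < S_k$, so there is a largest index $i \in \{1,\dots,k-1\}$ with $S_i \le w$, and then $S_{i+1} > w$; from $w < S_{i+1} \le S_1\rho^i$ and $S_i \ge S_1\rho^{i-1} - 1$ we get $S_i/w > 1/\rho - 1/\floor{n^{0.2}}$, which tends to $1$ uniformly in $w$. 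Taking the $o(1)$ in (3) to be $\max\big(1/\log n,\ 1 - 1/\rho + 1/\floor{n^{0.2}}\big)$ then yields part~(3).

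None of these steps is a genuine obstacle; the only delicate point is that three constraints must be met simultaneously — the ratio $\rho$ must tend to $1$ so that the partial sums are multiplicatively dense in $[n^{0.2}, n]$ for~(3), it must not shrink so fast that $k$ overshoots $(\log n)^2$, and the geometric increments $(\rho - 1)S_{i-1}$ must stay above $n^{0.1}$. The last requirement is reconciled with the first two by the single opening jump to $n^{0.2}$, which is legitimate precisely because~(3) is only demanded for $w \ge n^{0.2}$; everywhere else the floor functions contribute only $o(1)$ multiplicative errors and are harmless.
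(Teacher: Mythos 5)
Your proposal is correct and follows essentially the same route as the paper: both prescribe the partial sums $w_1+\cdots+w_i$ as a (floored) geometric progression with ratio $1+\Theta(1/\log n)$, starting from a first jump of size roughly $n^{0.2}$ (the paper uses $n^{0.15}$) and ending exactly at $n-w_0$, so that the increments stay above $n^{0.1}$ and the partial sums are multiplicatively dense. Your write-up just makes the floor corrections and the uniformity of the $o(1)$ explicit, which the paper leaves implicit.
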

\begin{proof}
Let $k \sim (\log n)^2$, and let $a$ be such that
\[
  \exp((1+a) \log (n-w_0)/(k+a))\sim n^{0.15}.
\]
Choose $w_1, \dots, w_k$ so that
\[
  w_1 + \cdots + w_i = \floor{\exp((i+a) \log(n-w_0)/(k+a))}.
\]
Then
\[
  \frac{w_1 + \cdots + w_{i+1}}{w_1 + \cdots + w_i} \sim e^{1 / \log n} = 1 + O(1 / \log n),
\]
so every $w \in \{1, \dots, n\}$ can be approximated as claimed.
\end{proof}

It follows that
\[
  \|\mu_n - \pi_n\| \leq \sum_{i=1}^k \| \mu_{w_i-C} - \nu_{w_i-C}\| \ll n^{-0.1 + o(1)},
\]
and we also have the following property.
\begin{lemma}\label{lm:pi}
For every $w \in [n^{0.2}, n]$, we have a factorization
\[
  \pi_n = \mu_{w_0} * (m_{a^{w_0}})_* \pi * \pi'
\]
for some probability measures $\pi, \pi'$ such that $\supp \pi \subset \supp \mu_w$ and
\[
  \log \|\pi\|_2^{-2} = Hw + o(w).
\]
\end{lemma}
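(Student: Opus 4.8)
The plan is to split the convolution defining $\pi_n$ after the first $i$ of the bracketed blocks, choosing $i$ so that these blocks account for roughly $w$ base-$a$ digits. Write $W_0=0$, $W_j=w_1+\cdots+w_j$ and $\rho_j=\nu_{w_j-C}\ast(m_{a^{w_j-C}})_\ast\mu_C$, so that
\[
  \pi_n=\mu_{w_0}\ast(m_{a^{w_0}})_\ast\rho_1\ast(m_{a^{w_0+W_1}})_\ast\rho_2\ast\cdots\ast(m_{a^{w_0+W_{k-1}}})_\ast\rho_k.
\]
Given $w\in[n^{0.2},n]$, I would use clause (3) of the preceding lemma to pick $i$ with $W_i\in[(1-o(1))w,w]$ and set
\[
  \pi=\rho_1\ast(m_{a^{W_1}})_\ast\rho_2\ast\cdots\ast(m_{a^{W_{i-1}}})_\ast\rho_i,\qquad
  \pi'=(m_{a^{w_0+W_i}})_\ast\rho_{i+1}\ast\cdots\ast(m_{a^{w_0+W_{k-1}}})_\ast\rho_k,
\]
so that $\pi_n=\mu_{w_0}\ast(m_{a^{w_0}})_\ast\pi\ast\pi'$. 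The support statement is then routine: from Corollary~\ref{cor:nu_n-exists}(2) and $\mu_{w_j}=\mu_{w_j-C}\ast(m_{a^{w_j-C}})_\ast\mu_C$ one gets $\supp\rho_j\subseteq\supp\mu_{w_j}$, and telescoping the convolution (using $\mu_{m+m'}=\mu_m\ast(m_{a^m})_\ast\mu_{m'}$) gives $\supp\pi\subseteq\supp\mu_{W_i}\subseteq\supp\mu_w$. Since $W_i=(1-o(1))w$ and $H(\mu_m)=(H+o(1))m$ uniformly in the relevant range of $m$, everything reduces to the estimate $\log\|\pi\|_2^{-2}=(H+o(1))W_i$.

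For the upper bound $\log\|\pi\|_2^{-2}\le(H+o(1))W_i$ I would go through entropy. Jensen's inequality for the convex function $-\log$ gives, with $X\sim\pi$,
\[
  \log\|\pi\|_2^{-2}=-\log\E_\pi[\pi(\{X\})]\le\E_\pi[-\log\pi(\{X\})]=H(\pi),
\]
so it suffices to bound $H(\pi)$. Replacing each $\nu_{w_j-C}$ in $\pi$ by $\mu_{w_j-C}$ turns $\pi$ into $\mu_{W_i}$, so telescoping together with Corollary~\ref{cor:nu_n-exists}(1) gives $\|\pi-\mu_{W_i}\|\le\sum_{j\le i}\|\nu_{w_j-C}-\mu_{w_j-C}\|\le n^{-0.1+o(1)}$; since $\pi$ and $\mu_{W_i}$ are both supported on $\supp\mu_{W_i}$, a set of $O_\mu(a^{W_i})$ integers, the standard estimate $|H(P)-H(Q)|\ll\|P-Q\|\log N+1$ for probability measures on an $N$-point set yields $|H(\pi)-H(\mu_{W_i})|\le n^{-0.1+o(1)}W_i+O(1)=o(W_i)$, while $H(\mu_{W_i})=(H+o(1))W_i$.

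The hard part is the matching lower bound $\log\|\pi\|_2^{-2}\ge(H-o(1))W_i$ — showing that $\pi$ is genuinely spread out. The trivial bound $\|\pi\|_2\le\min_j\|\rho_j\|_2$ only yields $\log\|\pi\|_2^{-2}\gg(H-o(1))\max_j w_j$, which by the construction of the $w_j$ is only about $(H-o(1))W_i/\log n$, losing a factor of $\log n$; I must instead exploit that the blocks sit at separated scales. By the defining property of $C$, each $\rho_j=\nu_{w_j-C}\ast(m_{a^{w_j-C}})_\ast\mu_C$ is supported in an interval of at most $2\kappa a^{w_j}$ integers for a constant $\kappa=\kappa(a,\mu)$, and so meets every coset of $a^{w_j}\Z$ in at most $2\kappa+1$ points. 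Factoring off the lowest block, $\pi=\rho_1\ast(m_{a^{w_1}})_\ast\widetilde\pi$ with $(m_{a^{w_1}})_\ast\widetilde\pi$ supported on $a^{w_1}\Z$, so for every $x$ the sum $\pi(\{x\})=\sum_m\rho_1(\{x-a^{w_1}m\})\widetilde\pi(\{m\})$ has at most $2\kappa+1$ nonzero terms; Cauchy--Schwarz and then summation over $x$ give $\|\pi\|_2^2\le(2\kappa+1)\|\rho_1\|_2^2\|\widetilde\pi\|_2^2$, and iterating $i-1$ times,
\[
  \|\pi\|_2^2\le(2\kappa+1)^{i-1}\prod_{j=1}^i\|\rho_j\|_2^2.
\]
To finish I would check that $\log\|\rho_j\|_2^{-2}=\log\|\nu_{w_j-C}\|_2^{-2}+O(1)=(H+o(1))w_j$ uniformly in $j$ — the upper bound from $\|\rho_j\|_2\le\|\nu_{w_j-C}\|_2$, the lower bound from $\|\rho_j\|_2^2\ge\|\mu_C\|_2^2\|\nu_{w_j-C}\|_2^2$ with $\|\mu_C\|_2\asymp1$, combined with $\log\|\nu_m\|_2^{-2}=H(\mu_m)+o(m)$ from Corollary~\ref{cor:nu_n-exists}(3) — and then, since $(i-1)\log(2\kappa+1)\ll(\log n)^2=o(W_i)$ as $W_i\ge n^{0.2}$, conclude $\log\|\pi\|_2^{-2}\ge(H-o(1))W_i$. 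Combining the two bounds with $W_i=(1-o(1))w$ gives $\log\|\pi\|_2^{-2}=(H+o(1))W_i=Hw+o(w)$, as required. The one genuinely new ingredient, and the main obstacle, is the scale-separation argument of this last paragraph: without it the convolution $\pi$ could a priori be far more concentrated than the product bound predicts, and the estimate would fail.
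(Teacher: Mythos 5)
Your proof is correct, and you correctly identify the one nontrivial point — why the convolution $\pi$ cannot be markedly more concentrated than the na\"ive product bound suggests — but you arrive there by a slightly different, somewhat bulkier route than the paper. The paper exploits precisely the purpose of the constant $C$: it takes
\[
\pi = \nu_{w_1-C}*(m_{a^{w_1}})_* \nu_{w_2-C} * \cdots * (m_{a^{w_1+\cdots+w_{i-1}}})_* \nu_{w_i-C},
\]
shuffling \emph{all} the $\mu_C$ correction pieces into $\pi'$ (even those from blocks $1,\dots,i$ — this is permissible because all the pushed-forward factors commute). Since $\supp\nu_{w_j-C}\subset[-a^{w_j}/2, a^{w_j}/2]$, the supports of the rescaled $\nu$-pieces are strictly nested at separated scales, so the sum map $(x_1,\dots,x_i)\mapsto x_1+\cdots+x_i$ on the product of supports is \emph{injective}, giving the exact factorization $\|\pi\|_2^2 = \prod_j\|\nu_{w_j-C}\|_2^2 = e^{-Hw + o(w) + iCH}$; since $i\le k\ll(\log n)^2 = o(w)$, the two-sided estimate drops out in one line. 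By contrast, you keep each $\mu_C$ inside its block as $\rho_j = \nu_{w_j-C}*(m_{a^{w_j-C}})_*\mu_C$. This spoils injectivity but preserves bounded overlap, which you recover with the Cauchy--Schwarz argument at a cost of $(2\kappa+1)^{i-1}$ — harmless, as you note, since $i\ll(\log n)^2 = o(W_i)$. Because your product bound is then only one-sided, you supply a separate entropy-continuity argument for the matching upper bound on $\log\|\pi\|_2^{-2}$. Both routes are valid; the paper's choice of decomposition makes the $C$-truncation do all the work and renders the $\ell^2$ computation an identity rather than an inequality plus a patch.
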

\begin{proof}
We take
\begin{align*}
\pi=& \nu_{w_1-C}*(m_{a^{w_1}})_* \nu_{w_2-C} * \cdots * (m_{a^{w_1 + \cdots + w_{i-1}}})_* \nu_{w_i-C}\\
\pi'=&(m_{a^{w_0+w_1 + \cdots + w_{i}}})_*\nu_{w_{i+1}-C} * \cdots * (m_{a^{w_0+w_1 + \cdots + w_{k-1}}})_* \nu_{w_k-C}\\
&*(m_{a^{w_0 + w_1-C}})_*\mu_C*\cdots*(m_{a^{w_0 +\cdots+ w_k-C}})_*\mu_C
\end{align*}
for a suitable choice of $i$ so that
\[
w_1+\cdots+w_i\in[(1-o(1))w,w].
\]
It follows by construction that $\supp \pi\subset\supp \mu_w$.

By the choice of $C$ in the definition of $\pi$,
it follows that the map
\begin{align*}
&\supp \nu_{w_1-C}\times\cdots
\times\supp  (m_{a^{w_1 + \cdots + w_{i-1}}})_* \nu_{w_i-C}&\to&\Z\\
&(x_1,\ldots,x_i) &\mapsto& x_1+\ldots+x_i
\end{align*}
is injective, so
\begin{align*}
  \|\pi\|_2^2
  &=\|\nu_{w_1-C}\|_2^2\cdots\|\nu_{w_i-C}\|_2^2 \\
  &=e^{-H(w_1-C) + o(w_1)} \cdots e^{-H(w_i-C) + o(w_i)} \\
  &=e^{-Hw + o(w) + iCH}
\end{align*}
and the claim follows since $w\geq n^{0.2}$ and $i \leq n^{0.1}$.
\end{proof}

\begin{proposition} \label{CDG-like-prop}
Assume $2H \geq (1+\eps)\log a$ and $Hn \geq (1+\eps) \log Q$. Then
\[
  \sum_{q \leq Q} q \|\pi_n \bmod q - u_q\|_2^2 \leq Q \exp( - c (\log Q)^{0.49}).
\]
\end{proposition}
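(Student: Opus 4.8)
\textbf{Proof proposal for Proposition \ref{CDG-like-prop}.}

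The plan is to estimate the left-hand side by splitting the sum over $q$ according to which divisor $s$ of $q$ is the relevant ``scale'', exploiting the factorization of $\pi_n$ from Lemma~\ref{lm:pi} together with the large sieve. First I would recall the identity
\[
  q\|\pi_n\bmod q - u_q\|_2^2 = \sum_{r\in(\Z/q\Z)^\times,\, r\neq 0} |\wh{\pi_n}(r/q)|^2
\]
is not quite what we want; rather, writing $q\|\pi_n\bmod q - u_q\|_2^2 = \sum_{s\mid q,\, s>1}\sum_{r\in R(s)}|\wh{\pi_n}(r/s)|^2$, we see that after summing over all $q\le Q$ each fraction $r/s$ (with $1<s\le Q$, $r\in R(s)$) is counted $\lfloor Q/s\rfloor$ times. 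Hence
\[
  \sum_{q\le Q} q\|\pi_n\bmod q - u_q\|_2^2 \le Q\sum_{1<s\le Q}\frac1s\sum_{r\in R(s)}|\wh{\pi_n}(r/s)|^2.
\]
The main step is therefore to bound $\sum_{r\in R(s)}|\wh{\pi_n}(r/s)|^2$ for each $s$, and then to sum a geometric-type series in $s$.

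For a fixed modulus $s$ with $\log s\le (1-\eps)Hn$ (say), I would choose $w\asymp \eps^{-1}\log s$ in $[n^{0.2},n]$ (legitimate since $Hn\ge(1+\eps)\log Q\ge(1+\eps)\log s$) and apply Lemma~\ref{lm:pi} to factor $\pi_n = \mu_{w_0}*(m_{a^{w_0}})_*\pi*\pi'$ with $\supp\pi\subset\supp\mu_w\subset[-a^w/2,a^w/2]$ and $\log\|\pi\|_2^{-2}=Hw+o(w)$. Since $|\wh{\pi_n}(\xi)|\le|\wh{\pi}(a^{w_0}\xi)|$, the Fourier mass of $\pi_n$ at $r/s$ is controlled by that of the dilate of $\pi$ at $a^{w_0}r/s$, which runs over $R(s')$ for $s' = s/(s,a^{w_0}) = s$ when $(s,a)=1$, and over a divisor of $s$ otherwise (this is where we need $(q,a)=1$, or can restrict to such $q$ and treat the rest by Lemma~\ref{lem:small-moduli} / the divisors with $s$ sharing a factor with $a$). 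The large sieve, Theorem~\ref{th:large-sieve} applied to $\pi$ on $[-a^w/2,a^w/2]$, then gives $\sum_{1<s'\le s}\sum_{r\in R(s')}|\wh{\pi}(r/s')|^2 \ll (s^2 + a^w)\|\pi\|_2^2$. Using $2H\ge(1+\eps)\log a$ we have $a^w\le s^{2/(1+\eps)}\cdot(\text{lower order}) = o(s^2)$, and $\|\pi\|_2^2 = e^{-Hw+o(w)} = s^{-2/\eps+o(1)}$ (more precisely $e^{-Hw}$ with $Hw = \eps^{-1}\log s$ up to the $o$); so the whole block of moduli $s\in[S/2,S]$ contributes $\ll S\cdot S^2\cdot S^{-2/\eps+o(1)}$, and after the extra $1/s$ weight and summing dyadically in $S$ up to $Q$ we land on a bound of shape $Q\cdot Q^{O(1)}\cdot e^{-c(\log Q)^{?}}$ — the point being that the genuine savings comes from the regime where $\log s$ is comparable to $Hn$, i.e. $s$ close to $Q$, where $\|\pi\|_2^2$ is as small as $e^{-Hn+o(n)}\le Q^{-(1+\eps)+o(1)}$ and beats $s^2\le Q^2$.

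To get the stated exponent $(\log Q)^{0.49}$ I would be more careful near the top of the range: for $s\in[Q^{1-\eta},Q]$ choose $w$ maximal (close to $n$), so $\|\pi\|_2^2 = e^{-Hn+o(n)}$ and the saving is $Q^{2 - (1+\eps) + o(1)}/Q = Q^{-\eps+o(1)}$ per dyadic block — but this $o(1)$ hides a $(\log)$-power loss from the SMB/$\nu_n$ approximation ($\alpha = 1/\log n$ in Corollary~\ref{cor:nu_n-exists} gives an error $e^{O(\alpha n)} = e^{O(n/\log n)}$), which degrades the clean power saving to the sub-polynomial form $\exp(-c(\log Q)^{0.49})$; the exponent $0.49$ rather than, say, $1-o(1)$ reflects balancing the choice of $w$-range length $n^{0.2}$ versus these $\log$-losses, exactly paralleling the exponents in Lemma~\ref{lm:pi} and the preceding lemma. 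I would also separately dispose of: (a) moduli $s$ not coprime to $a$, via $\mu_n\approx\pi_n$ together with Lemma~\ref{lem:small-moduli} (the relevant $s$ are automatically of size $\le a^{O(n/\log n)}$ after factoring out the power of $a$, hence small enough); (b) very small $s\le n^{0.2}$, again by Lemma~\ref{lem:small-moduli}, whose contribution $\sum_{s\le n^{0.2}} s\cdot e^{-cn/\log s}\cdot(Q/s)$ is negligible. The main obstacle is the bookkeeping in step (ii): ensuring that after dilating by $a^{w_0}$ the fractions $r/s$ with $r\in R(s)$ map to distinct fractions whose denominators are still $\gg s$ (so that the large sieve's $s^2$ term, not a smaller $s'^2$, is what we pay — actually this helps us, the real worry is the reverse, that denominators shrink and we lose coprimality structure), which forces the restriction $(q,a)=1$ and the separate small-moduli treatment; and, quantitatively, tracking the $o(n)$ terms from Corollary~\ref{cor:nu_n-exists} and Lemma~\ref{lm:pi} carefully enough to certify the exponent $0.49$.
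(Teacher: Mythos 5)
Your overall strategy matches the paper's: reduce the sum over $q$ to $Q\sum_{s}\frac1s\sum_{r\in R(s)}|\widehat{\pi_n}(r/s)|^2$, apply the large sieve to a suitable pruned factor $\pi$ of $\pi_n$ from Lemma~\ref{lm:pi}, and treat small $s$ separately. However, there is a genuine gap in the key step, namely the choice of scale $w$.

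You take $w$ with $Hw\asymp\eps^{-1}\log s$. With that choice the measure $\pi$ from Lemma~\ref{lm:pi} is supported in $[-a^w/2,a^w/2]$, and $a^w = e^{w\log a}\asymp s^{\eps^{-1}\log a/H}$. Since the hypothesis is only $H\ge\frac{1+\eps}{2}\log a$, the exponent $\eps^{-1}\log a/H$ is of order $\eps^{-1}$, so $a^w$ can be as large as $s^{O(1/\eps)}$, wildly exceeding $s^2$. Thus the support term $N\asymp a^w$ in Theorem~\ref{th:large-sieve} dominates, and your claim "$a^w\le s^{2/(1+\eps)}\cdot(\text{lower order})=o(s^2)$" is simply false; the large sieve then gives a bound $\gg s^{\Omega(1/\eps)}\|\pi\|_2^2$ which, after dividing by $s$, blows up rather than saving. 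The correct choice is $w\asymp\log_a s$ (i.e. $s\in(a^{w-1},a^w]$), and then one uses the factor $\pi$ of Lemma~\ref{lm:pi} at scale $w'=\min(2w,n)$, so $\supp\pi\subset\supp\mu_{w'}\subset[-O(a^{2w}),O(a^{2w})]$ matches the $S^2=a^{2w}$ term in the large sieve. This yields $\frac1s\sum_{r\in R(s)}|\widehat\pi(r/s)|^2\ll a^w\|\pi\|_2^2=a^we^{-Hw'+o(w)}$, and the hypotheses enter precisely as $Hw'\ge(1+\eps)w\log a$ (either $Hw'=2Hw\ge(1+\eps)w\log a$ by $2H\ge(1+\eps)\log a$, or $Hw'=Hn\ge(1+\eps)t\log a\ge(1+\eps)w\log a$ by $Hn\ge(1+\eps)\log Q$), giving $e^{-\eps w\log a+o(w)}$ per dyadic block.

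Two smaller points. First, the small-$s$ regime in the paper is $\log_a s\le n^{1/2}/\log n$, not $s\le n^{0.2}$, and is handled not via Lemma~\ref{lem:small-moduli} but directly via the leading $\mu_{w_0}$ factor of $\pi_n$: writing $n'\sim w^{-1}n/\log n$, the crude bound $|\widehat{\mu_w}(r/s)|\le e^{-c}$ for $r\in R(s)$, $s\asymp a^w$, gives $|\widehat{\mu_{n'w}}(r/s)|^2\le e^{-cn'}$ and hence a contribution $\ll a^we^{-cn'}\le e^{-c'n^{1/2}}$ for $w$ in this range. (Your idea of using Lemma~\ref{lem:small-moduli} on the $\mu_{w_0}$ factor would also work, but you would need to raise the threshold from $n^{0.2}$ to about $n^{1/2}/\log n$ to meet the large-sieve range in the middle.) Second, the exponent $0.49$ comes from the crossover point $n^{1/2}/\log n$ between the two regimes — the upper-range sum is $\ll e^{-c n^{1/2}/\log n}$ and $n\asymp\log Q$ — and not from the exponent $n^{0.2}$ in Lemma~\ref{lm:pi} nor directly from the SMB error $\alpha=1/\log n$; per the remark after the proposition, enlarging $w_0$ from $\floor{n/\log n}$ to a small constant multiple of $n$ would improve $0.49$ to $0.5$. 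Finally, there is no need to "dispose of moduli $s$ not coprime to $a$": the sum is over $q$ coprime to $a$, so all divisors $s\mid q$ are automatically coprime to $a$ and the dilation by $a^{w_0}$ is a bijection on $R(s)$.
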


The exponent $0.49$ can actually be improved to $0.5$. To do this, instead of $w_0 = \floor{n / \log n}$, take $w_0$ to be a sufficiently small (depending on $\eps$) constant multiple of $n$. We omit the details.

\begin{proof}
Write $Q = a^t$. We want to bound
\begin{align*}
  \sum_{q \in (a^{t-1}, a^t)} q \|\pi_n \bmod q - u_q\|_2^2
  &= \sum_{q \in (a^{t-1}, a^t)} \sum_{0 < k < q} |\pihat[n](k/q)|^2 \\
  &= \sum_{w=1}^t \sum_{r, s} M(r/s) |\pihat[n](r/s)|^2,
\end{align*}
where the last sum runs over all $r, s$ with
\begin{align*}
  &s \in (a^{w-1}, a^w), \\
  &(a, s) = 1, \\
  &r \in R(s) = \{r \bmod s : (r, s) = 1\},
\end{align*}
and $M(r/s)$ is the number of $k/q = r/s$ with $q \in (a^{t-1}, a^t)$ and $0 < k < q$. In other words, $M(r/s)$ is the number of multiples of $s$, coprime with $a$, in the range $(a^{t-1}, a^t)$. We have
\[
  M(r/s) \leq a^t / s,
\]
so our sum is bounded by
\[
  a^t \sum_{w=1}^t \sum_{r, s} \frac1s |\pihat[n](r/s)|^2.
\]
We split this sum into two parts depending on the size of $w$: either $w \leq n^{1/2} / \log n$ or $w > n^{1/2} / \log n$.

Consider first the lower range $w \leq n^{1/2} / \log n$. Since $\mu$ is not supported on a coset of a subgroup, $\muhat{}(\xi) = 1$ only if $\xi = 0$. Find $c > 0$ such that
\[
  |\muhat{}(\xi)|^2 \leq e^{-c} \qquad (\xi \notin (-1/a, 1/a)).
\]
For any $r \in R(s)$, $s \in (a^{w-1}, a^w)$, $r/s \notin (-a^{-w}, a^{-w})$, so for some $j \leq w-1$ we have $a^j r/s \notin (-1/a, 1/a)$, so
\[
  |\muhat[w](r/s)| = \prod_{j=0}^{w-1} |\muhat(a^j r/s)| \leq e^{-c}.
\]
Hence
\[
  |\muhat[n'w](r/s)|^2 \leq \left(\max_{r' \in R(s)} |\muhat[w](r'/s)|^2\right)^{n'} \leq e^{-cn'}
\]
for any $n'\in\Z_{>0}$.
Therefore
\[
  \sum_{s \in (a^{w-1}, a^w)} \frac1s \sum_{r \in R(s)} |\muhat[n'w](r/s)|^2 \leq a^{w} e^{-c n'}.
\]
Now because $\mu_{w_0}$ is a factor of $\pi_n$ we also have
\[
  \sum_{s \in (a^{w-1}, a^w)} \frac1s \sum_{r \in R(s)} |\pihat[n](r/s)|^2 \leq a^{w} e^{-c n'},
\]
provided that $n'w \leq w_0 \sim n / \log n$. Take $n' \sim w^{-1} n / \log n$. Then $a^w e^{-cn'}$ is negligible for $w < c'(n / \log n)^{1/2}$ for a sufficiently small constant $c'$.

Now consider the upper range $w > n^{1/2} / \log n$. Let $w' = \min(2w, n)$. By construction, $\pi_n$ has a factor $(m_{a^{w_0}})_* \pi$ such that
\[
  \supp \pi \subset \supp \mu_{w'} \subset [-O(a^{2w}), O(a^{2w})]
\]
and
\[
  \log\|\pi\|_2^{-2} = Hw' + o(w)
\]
by Lemma \ref{lm:pi}.
Therefore
\[
  |\pihat[n](r/s)|^2 \leq |\pihat(a^{w_0} r/s)|^2,
\]
and by the large sieve (Theorem \ref{th:large-sieve}),
\[
  \sum_{s \in (a^{w-1}, a^w)} \frac1s \sum_{r \in R(s)} |\pihat(r/s)|^2
  \ll a^w \|\pi\|_2^2 = a^w e^{-Hw' + o(w)}.
\]
By hypothesis
\begin{align*}
  2H &\geq (1+\eps) \log a, \\
  Hn &\geq (1+\eps) t \log a,
\end{align*}
so
\[
  H w' \geq (1+\eps) w \log a.
\]
Hence $a^w e^{-Hw' + o(w)}$ is bounded by $e^{-c w}$ for some $c>0$. Thus the contribution from this case is negligible too.
\end{proof}

We have proved that
\[
  \sum_{q \leq Q} q \|\pi_n \bmod q - u_q\|_2^2 \leq Q \exp(-c (\log Q)^{0.49}),
\]
provided that $Hn > (1+\eps) \log Q$. Hence apart from some
\[
  Q \exp(-c (\log Q)^{0.49})
\]
exceptions $q \leq Q$ we have
\[
  q \|\pi_n \bmod q - u_q\|_2^2 = \exp(-c (\log Q)^{0.49}),
\]
and hence
\begin{align*}
  \|\mu_n \bmod q - u_q\|
  &\leq \|\mu_n - \pi_n\| + q^{1/2} \|\pi_n \bmod q - u_q\|_2 \\
  &= (\log q)^{-c}.
\end{align*}
This proves Theorem~\ref{mainthm}(ii) (both prime and composite cases).

\section{The larger sieve argument}\label{sc:larger-sieve}

The purpose of this section is to prove Theorem \ref{larger-sieve-thm}.
The following argument is based on Gallagher's larger sieve as described in \cite{friedlander--iwaniec--opera}*{Chapter~9.7}. By Corollary~\ref{cor:nu_n-exists}, there is a measure $\nu_n$ such that
\begin{align*}
  & \|\mu_n - \nu_n\| = o(1), \\
  & \|\nu_n\|_2^{-2} = \exp((H+o(1))n).
\end{align*}
Let $Q = \|\nu_n\|_2^{-2}$. Note that
\[
  \|\nu_n \bmod p\|_2^2 = \frac1{Q} + \sum_{x \neq y} \nu_n(x) \nu_n(y) 1_{p \mid (x - y)}.
\]
Summing over primes $p \leq X$ with weight $\log p$, we have
\[
  \sum_{p \leq X} \|\nu_n \bmod p\|_2^2 \log p
  \leq \frac1{Q} \sum_{p \leq X} \log p + \sum_{x \neq y} \nu_n(x) \nu_n(y) \sum_{p \mid (x-y)} \log p.
\]
The first sum is $O(X)$. Since $\nu_n$ is supported on $[-O(a^n), O(a^n)]$, for $x, y \in \supp \nu_n$ we have
\[
  \sum_{p \mid (x - y)} \log p \leq \log |x-y| \ll n.
\]
Thus
\[
  \sum_{p \leq X} \|\nu_n \bmod p\|_2^2 \log p \ll \frac{X}{Q} + n.
\]
Let $B_1$ be the set of primes $p \leq Q$ such that
\[
  p \|\nu_n \bmod p\|_2^2 > \delta n^{1/2}.
\]
(Later we will take $\delta$ to be a sufficiently small constant.) Then it follows that
\[
  \sum_{p \in B_1} \frac{\log p}{p} \ll \delta^{-1} n^{1/2}.
\]
In the rest of the argument we assume $p \leq Q$ and $p \notin B_1$, i.e.,
\[
  p \|\nu_n \bmod p\|_2^2 \leq \delta n^{1/2}.
\]

Now we use a Fourier multiplicity argument, as in \cite{BV--ax+b}*{Section~3}. Define
\[
  \nu_n^{(m)} = \frac1{m+1} \sum_{i=0}^{m} \mu_i * (m_{a^i})_* \nu_n * (m_{a^{i+n}})_* \mu_{m-i}.
\]
Then
\[
  \|\mu_{n+m} - \nu_n^{(m)}\| \leq \|\mu_n - \nu_n\| = o(1),
\]
and
\[
  \hat{\nu_n^{(m)}}(\xi)
  = \frac1{m+1} \sum_{i=0}^m \muhat[i](\xi) \hat{\nu_n}(a^i \xi) \muhat[m-i](a^{i+n} \xi),
\]
so
\begin{align*}
  |\hat{\nu_n^{(m)}}(\xi)|
  &= \frac1{m+1} \sum_{i=0}^m |\muhat[i](\xi)| |\hat{\nu_n}(a^i \xi)| |\muhat[m-i](a^{i+n} \xi)| \\
  &\leq \frac1{m+1} \sum_{i=0}^m |\hat{\nu_n}(a^i \xi)| .
\end{align*}
By Cauchy--Schwarz,
\[
  |\hat{\nu_n^{(m)}}(\xi)|^2 \leq \frac1{m+1} \sum_{i=0}^m |\hat{\nu_n}(a^i \xi)|^2.
\]
Hence, provided $a$ has order at least $m+1$ in $\F_p^\times$,
\[
  \max_{r \neq 0} |\hat{\nu_n^{(m)}}(r/p)|^2 \leq \frac1{m+1} \sum_{r\neq 0} |\hat{\nu_n}(r/p)|^2 \leq \frac{\delta n^{1/2}}{m+1}.
\]
Applying this with $m = \delta n$, we get
\begin{equation} \label{notB2}
  \max_{r \neq 0} |\hat{\nu_n^{(m)}}(r/p)|^2 \leq n^{-1/2}.
\end{equation}

Let $B_2$ be the set of primes $p \leq Q$ such that $a$ does not have order at least $m+1$ in $\F_p^\times$. For $p \in B_2$, $p$ divides $a^i - 1$ for some $i \leq m$, so $|B_2| \leq m^2$. Thus
\[
  \sum_{p \in B_2} \frac{\log p}{p}
  \ll \sum_{p \leq m^2} \frac{\log p}{p}
  + |B_2| \frac{\log m}{m^2}
  \ll \log m.
\]
Thus we may assume $p \notin B_2$, so \eqref{notB2} holds.

Finally, observe that
\[
  \|\mu_{2n+m} - \nu_n * (m_{a^n})_*\nu_n^{(m)}\| \leq 2 \|\mu_n - \nu_n\| = o(1),
\]
and
\begin{align*}
  q\|\nu_n * (m_{a^n})_*\nu_n^{(m)} \bmod q - u_q\|_2^2
  &= \sum_{r\neq 0} |\hat{\nu_n}(r/p)|^2 |\hat{\nu_n^{(m)}}(a^nr/p)|^2 \\
  &\leq q\|\nu_n \bmod p\|_2^2 \max_{r\neq 0} |\hat{\nu_n^{(m)}}(r/p)|^2 \\
  &\leq \delta.
\end{align*}
Thus
\begin{align*}
  \|\mu_{2n+m} \bmod p - u_p\|
  &\leq \|\mu_{2n+m} - \nu_n * (m_{a^n})_*\nu_n^{(m)}\|\\
  &\qquad + p^{1/2}\|\nu_n * (m_{a^n})_*\nu_n^{(m)} \bmod p - u_p\|_2 \\
  &\leq \delta + o(1).
\end{align*}
In other words, for every $p \notin B_1 \cup B_2$ there is some
\[
  n' = 2n + m = (2 + \delta + o(1)) H^{-1} \log Q
\]
such that $\|\mu_{n'} \bmod q - u_q\| \leq \delta + o(1)$. Taking $\delta$ smaller than $\eps$, we deduce Theorem~\ref{larger-sieve-thm}.

\section{Open problems}\label{sc:OP}

It would be very interesting to understand the mixing time of the random walk if we drop the condition $H>\frac12\log a$ in
Theorem~\ref{mainthm}(ii).
We do not think this condition is necessary, but it is required in our proof so that we can apply the large sieve.
More specifically, we formulate the following open problem.

\begin{OP}
Let $\mu=\frac{1}{2}(\d_0+\d_1)$ and let $a\ge 2$ be an integer.
Is the mixing time of $X_n \bmod q$ equal to $(1+o(1))\log_2 q$ for almost all $q$ such that $(q, a)=1$?
\end{OP}

For $a=2$, the answer is yes trivially, because $X_n$ is uniformly distributed in the interval $[0,2^n-1]\cap \Z$.
The $a=3$ case is covered by our Theorem~\ref{mainthm}, and the answer is yes.
The $a=4$ case is not covered by our Theorem~\ref{mainthm}, but the answer is still yes, as can be seen by a modification
of our argument, which we sketch bellow.
The $a\ge 5$ case appears to be beyond the reach of the large sieve and we only have the bounds from Theorem~\ref{larger-sieve-thm}.

For the $a=4$ case of the above problem we modify the proof of Theorem~\ref{mainthm}(ii) as follows.
For simplicity, we only discuss the case of prime moduli.
Fix some $n\in\Z_{>0}$ and also some $\e=\e(n)>0$ to be specified below.
Write
\begin{align*}
\eta_1=&\mu_{\e n},\\
\eta_2=&(m_{4^{\e n}})_*\mu_{(1-\e)n}\\
\eta_3=&(m_{4^{n}})_*\mu_{\e n}.
\end{align*}
We choose $\e$ in such a way that $\e n$ is an integer,
$\e$ slowly goes to $0$ as $n$ grows.
Observe that
\begin{align*}
\|\eta_1*\eta_1*\eta_2\|_2^2<& 2^{-(1+c\e_1)n},\\
\supp(\eta_1*\eta_1*\eta_2)<&[-C 4^n,C4^n]
\end{align*}
with some absolute constants $c,C$.
Now we can apply the large sieve for the measure $\eta_1*\eta_1*\eta_2$
and conclude that
\[
\sum_{r\in R(p)}|\wh\eta_1(r/p)|^2\cdot|\wh\eta_1(r/p)|^2\cdot|\wh\eta_2(r/p)|^2\ll 2^n\cdot2^{-(1+c\e_1)n}=2^{-c\e_1 n}
\]
for most primes $p$ in the range $[2^{n-1},2^n]$.
Now the claimed mixing time bound follows from
\begin{align*}
\|\mu_{(1+\e)n} \bmod p -u_p\|_2^2
=&\sum_{r\in R(p)}|\wh\eta_1(r/p)|^2\cdot|\wh\eta_2(r/p)|^2\cdot|\wh\eta_3(r/p)|^2\\
\le&\Big(\sum_{r\in R(p)}|\wh\eta_1(r/p)|^4\cdot|\wh\eta_2(r/p)|^2\Big)^{1/2}\\
&\times\Big(\sum_{r\in R(p)}|\wh\eta_3(r/p)|^4\cdot|\wh\eta_2(r/p)|^2\Big)^{1/2}\\
\ll&2^{-c\e_1 n}.
\end{align*}
The second factor in the second to last line can be estimated similarly to the first one as we did above.

It would be also interesting  to obtain a better understanding of the exceptional moduli in Theorem~\ref{mainthm}.
We suggest the following two problems in this direction.

\begin{OP}
Fix some $\mu$ and $a\in\Z_{\ge 2}$.
Suppose that the base $a$ expansion of some $q$ is generic in a suitable sense.
Is the mixing time of $X_n \bmod q$ equal to $(1+o(1))H^{-1}\log q$?
\end{OP}

This question is somewhat vague.
It is motivated by the counter-example of $q=a^n-1$, for which the mixing time
is $O(\log q\log\log q)$ as was demonstrated in \cite{CDG}.

\begin{OP}
Fix some $\mu$, and write
$\{X_n^{(a)}\}_{n\in\Z_{>0}}$ for the random walk defined in the introduction with the parameter $a$.
Is it true that for each $q$ with $2,3\nmid q$ at least one of the following holds:
\begin{itemize}
\item the mixing time of $X_n^{(2)} \bmod q$ equals $(1+o(1))H(\mu,2)^{-1}\log q$,
\item the mixing time of $X_n^{(3)} \bmod q$ equals $(1+o(1))H(\mu,3)^{-1}\log q$?
\end{itemize}
\end{OP}

This question is motivated by a series of problems posed by Furstenberg on the joint action of
$x\mapsto 2 x$ and $x\mapsto 3 x$ on $\R/\Z$.

\section*{Acknowledgements}

We thank Kevin Ford for discussions related to Lemma~\ref{lm:divisors}.
We thank Martin Hildebrand and Max Wenqiang Xu for comments and suggestions on an earlier version of this paper.
We are grateful to the anonymous referee for carefully reading the paper and for helpful comments.

\bibliography{refs}

\end{document}